\newtheorem{theorem}{Theorem}[section]
\newtheorem{lemma}{Lemma}[section]
\newtheorem{corollary}{Corollary}[section]
\newtheorem{proposition}{Proposition}[section]
\newtheorem{remark}{Remark}[section]
\newcounter{theor}
\newtheorem{thm}[theor]{Theorem}
\def\conv{\mathop\mathrm{conv}\nolimits}
\def\s{\mathbb{S}}
\def\E{\mathbb{E}}
\def\R{\mathbb{R}}
\def\N{\mathbb{N}}
\newcommand{\dlat}{\mathrm{d}}
\def \p{\Pi}
\def \pp{\Pi^{\circ}}
\def\esc#1{\left\langle #1\right\rangle}
\numberwithin{equation}{section}
\begin{document}

\title{On stochastic forms of functional isoperimetric inequalities}

\author[F. Marín Sola]{Francisco Marín Sola}
\email{francisco.marin7@um.es}
\address{Department of Engineering and Technology of Computers, area of Applied Mathematics. Universidad de Murcia, Spain.}

\thanks{The author is supported by the grant PID2021-124157NB-I00, funded by MCIN/AEI/10.13039/501100011033/``ERDF A way of making Europe'', as well as by the grant  ``Proyecto financiado por la CARM a través de la convocatoria de Ayudas a proyectos para el desarrollo de investigación científica y técnica por grupos competitivos, incluida en el Programa Regional de Fomento de la Investigación Científica y Técnica (Plan de Actuación 2022) de la Fundación Séneca-Agencia de Ciencia y Tecnología de la Región de Murcia, REF. 21899/PI/22''.}

\subjclass[2020]{Primary 52A38, 52A40, 26B15, 26D15; Secondary 52A20}
\keywords{Isoperimetric inequalities; $p$-concave functions; Rearrangements.}
\maketitle
\begin{abstract}
      We present a probabilistic interpretation of several functional isoperimetric inequalities within the class of $p$-concave functions, building on random models for such functions introduced by P. Pivovarov and J. Rebollo-Bueno. First, we establish a stochastic isoperimetric inequality for a functional extension of the classical quermassintegrals, which yields a Sobolev-type inequality in this random setting as a particular case. Motivated by the latter, we further show that Zhang's affine Sobolev inequality holds in expectation when dealing with these random models of $p$-concave functions. Finally, we confirm that our results recover both their geometric analogues and deterministic counterparts. As a consequence of the latter, we establish a generalization of Zhang's affine Sobolev inequality restricted to $p$-concave functions in the context of convex measures.
\end{abstract}
\section{Introduction}
A fundamental isoperimetric principle in convex geometry states that, among all convex bodies (compact convex sets with non-empty interior) of a given volume, Euclidean balls have the smallest $i$-th quermassintegral for every $i\in\{1,\dots,n-1\}$. More precisely, if $K\subset\R^n$ is a convex body,
\begin{equation}\label{e:classical_isoperimetric}
    W_i(K) \geq W_i(B_K),
\end{equation}
where $W_i(\cdot)$ denotes the $i$-th quermassintegral (see Section \ref{s:background} for a precise definition) and $B_K$ is a Euclidean ball with the same volume as $K$. When $i=1$, \eqref{e:classical_isoperimetric} is nothing but the classical isoperimetric inequality for convex bodies. For context and background the reader may check the excellent books by R. Gardner \cite{gardner_book}, P. Gruber \cite{G07} and R. Schneider \cite{Sch2}.  

Inspired by the  work of H. Groemer \cite{Groemer73} on the expected volume of random polytopes, the isoperimetric-type inequality \eqref{e:classical_isoperimetric} was shown to hold, by R. E. Pfiefer in \cite{Pfiefer82} and by M. Hartzoulaki and G. Paouris in \cite{HarPao}, in expectation when dealing with random convex sets (see also \cite{HarPao}). In a more detailed manner, for any collection of independent random vectors $\{X_k\}_{k=1}^N$ uniformly distributed in $K$,  they proved that 
\begin{align}\label{e:groemer_quermass}
    \E\Bigl[W_i\bigl(\conv\{X_1,\dots,X_N\}\bigr)\Bigl] \geq \E\Bigl[W_i\bigl(\conv\{X^*_1,\dots,X^*_N\}\bigr)\Bigl]
\end{align}
for every $i\in\{0,1,\dots,n-1\}$, where $\{X_k^*\}_{k=1}^N$ are independent random vectors uniformly distributed in $B_K$. Note that \eqref{e:groemer_quermass} recovers \eqref{e:classical_isoperimetric} as $N\to +\infty$. 

A systematic study of stochastic dominance phenomena in isoperimetric-type inequalities was initiated in \cite{PP12} (and further developed in \cite{PP13-2,CEFPP15,PP17-2,APPS24,DPP16}) where, among other findings, it was proven that convexity (or concavity) properties of certain functionals lead to a stochastic dominance in the associated inequalities. More recently, a similar program focused on stochastic analogues of functional inequalities in the realm of $p$-concave functions was started by P. Pivovarov and J. Rebollo-Bueno in \cite{PRB2,PRB}. The aim of this paper is to continue their program by presenting several stochastic analogues of functional isoperimetric inequalities for $p$-concave functions.

In this regard, we will consider a generalization of the classical quermassintegrals to the setting of $p$-concave functions, independently introduced by S. Bobkov, A. Colesanti, and I. Fragalà \cite{BCF14}, and by V. Milman and L. Rotem \cite{MilRot13} (see Section~\ref{s:background}). Among other related results, the following functional extension of \eqref{e:classical_isoperimetric} was proved in \cite{BCF14,MilRot13}. 
\begin{thm}\label{t:func_iso_quermass}
    Let $f: \R^n \to \R_+$ be an integrable $p$-concave function, with $p\in \R \cup \{\pm\infty\}$. Then, for every $i\in\{1,\dots,n-1\}$,
    \begin{equation}\label{e:func_iso_quermass}
    W_i(f) \geq W_i(f^*),
    \end{equation}
    where $f^*$ denotes the symmetric decreasing rearrangement of $f$.
\end{thm}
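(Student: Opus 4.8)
The plan is to reduce \eqref{e:func_iso_quermass} to the geometric isoperimetric inequality \eqref{e:classical_isoperimetric} by means of the layer-cake description of the functional quermassintegrals. Write $L_f(t)=\{x\in\R^n:f(x)\ge t\}$ for the superlevel sets of $f$. The first step is to pin down why $W_i(f)$ and $W_i(f^*)$ make sense here: a $p$-concave function is quasi-concave, so every $L_f(t)$ is convex, and since $f$ is integrable, Markov's inequality gives $\vol\bigl(L_f(t)\bigr)<\infty$ for every $t>0$; together with convexity this forces each proper superlevel set to be bounded, as a finite-volume unbounded convex set would have to lie in a hyperplane, which is incompatible with the integrability of a nonzero $p$-concave function. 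Thus each $L_f(t)$ is a compact convex set with $W_i\bigl(L_f(t)\bigr)<\infty$, and I would then invoke the Steiner-type formula recalled in Section~\ref{s:background} for the level-set sum $f\oplus\rho B$ (i.e.\ the function characterized by $L_{f\oplus\rho B}(t)=L_f(t)+\rho B$, with $B$ the Euclidean unit ball): expanding $\int_{\R^n}(f\oplus\rho B)(x)\,dx=\int_0^\infty\vol\bigl(L_f(t)+\rho B\bigr)\,dt$ by the classical Steiner formula and Fubini and comparing coefficients of $\rho^i$ yields, up to a positive factor depending only on $n$ and $i$ (immaterial for the comparison at hand),
\begin{equation}\label{e:plan_layercake}
 W_i(f)=\int_0^\infty W_i\bigl(L_f(t)\bigr)\,dt,
\end{equation}
together with the analogous identity for $W_i(f^*)$, which is legitimate since the symmetric decreasing rearrangement again produces convex (indeed, ball-shaped) bounded superlevel sets. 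If the integral in \eqref{e:plan_layercake} diverges there is nothing to prove, so one may assume it is finite.

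The second step is to identify the superlevel sets of the rearrangement: by the very definition of $f^*$, for every $t>0$ the set $L_{f^*}(t)$ is the origin-centered Euclidean ball with $\vol\bigl(L_{f^*}(t)\bigr)=\vol\bigl(L_f(t)\bigr)$; equivalently, it is a ball $B_{L_f(t)}$ of the same volume as the convex body $L_f(t)$, so that $W_i\bigl(L_{f^*}(t)\bigr)=W_i\bigl(B_{L_f(t)}\bigr)$ by translation invariance of the quermassintegrals.

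The third step is the levelwise comparison followed by integration. For each fixed $t>0$ and each $i\in\{1,\dots,n-1\}$, applying \eqref{e:classical_isoperimetric} to the convex body $L_f(t)$ gives $W_i\bigl(L_f(t)\bigr)\ge W_i\bigl(B_{L_f(t)}\bigr)=W_i\bigl(L_{f^*}(t)\bigr)$ (lower-dimensional levels cause no trouble, since there the two sides agree, or one passes to a limit). Integrating this inequality over $t\in(0,\infty)$ and using \eqref{e:plan_layercake} for $f$ and for $f^*$ produces $W_i(f)\ge W_i(f^*)$, as desired.

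The point I expect to be the main obstacle is the reduction \eqref{e:plan_layercake}: one must verify that the level-set Steiner formula and the resulting layer-cake identity are valid uniformly over the entire range $p\in\R\cup\{\pm\infty\}$ --- in particular for $p<0$, where $f$ may have unbounded support while every proper superlevel set stays bounded --- and that the (common) normalizing factor is irrelevant to the comparison. Once \eqref{e:plan_layercake} is in hand, the statement follows at once from \eqref{e:classical_isoperimetric}, the monotonicity of the integral, and the equimeasurability $\vol\bigl(L_{f^*}(t)\bigr)=\vol\bigl(L_f(t)\bigr)$ encoded in the rearrangement.
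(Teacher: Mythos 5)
Your argument is correct, and it is essentially the standard direct proof of this inequality (the one underlying \cite{BCF14,MilRot13}); but it is a genuinely different route from the one taken in this paper, which does not prove Theorem~\ref{t:func_iso_quermass} directly at all. Here the theorem is instead \emph{recovered} in Section~\ref{s:deterministic} as a limiting case of its stochastic form, Theorem~\ref{t:stoch_iso_quermass}: one truncates $f$ to $f_\varepsilon=f\chi_{_{\{f\ge\varepsilon\}}}$, shows (Proposition~\ref{p:convergence_quermass}) that $W_i\bigl(\Phi^N_{(X_k,Z_k)}\bigr)\to W_i(f_\varepsilon)$ almost surely as $N\to+\infty$, interchanges limit and expectation by dominated convergence, applies the stochastic inequality for each $N$, and finally lets $\varepsilon\to0^+$. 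Your approach is far more elementary and self-contained, whereas the paper's derivation is there precisely to demonstrate that the stochastic statement dominates the deterministic one. Two remarks on your write-up. First, the step you single out as ``the main obstacle'' --- the layer-cake identity $W_i(f)=\int_0^{+\infty}W_i(\{f\ge t\})\,\dlat t$ --- is not an obstacle in this framework: it is the \emph{definition} of the functional quermassintegral adopted in Section~\ref{s:background}, so your Steiner-formula derivation of it, while a fine sanity check, is not needed; with that definition in place the whole proof collapses to the levelwise application of \eqref{e:classical_isoperimetric} plus equimeasurability of the rearrangement (note $f^*$ is defined via the strict superlevel sets $\{f>t\}^*$, but $|\{f^*\ge t\}|=|\{f\ge t\}|$ for every $t>0$ by continuity from above, so nothing is lost). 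Second, at the exceptional levels where $\{f\ge t\}$ is lower-dimensional the two sides do \emph{not} in general agree --- e.g.\ a segment has positive mean width while the equal-volume ball is a point with $W_i=0$ for $i\le n-1$ --- but the inequality then holds trivially in the right direction, so this is a harmless imprecision rather than a gap.
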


In light of \eqref{e:groemer_quermass}, it is natural to wonder about a stochastic analogue of \eqref{e:func_iso_quermass}. Our first main result provides such an analogue. To state it precisely, we first introduce the stochastic framework developed by P. Pivovarov and J. Rebollo-Bueno. Let $N\geq n+1$. Given an integrable $p$-concave function $f:\R^n \to \R_+$, with $p \in \R\cup\{+\infty\}$, and $\{(X_k,Z_k)\}_{k=1}^N$ independent random vectors uniformly distributed w.r.t. the Lebesgue measure on the hypograph of $f$, i.e., uniformly sampled in
$$
\text{hyp}(f) = \{(x,z)\in \R^n \times \R_+
: f(x) \geq z\}
$$
(for the sake of simplicity, in the following we will simply say distributed w.r.t. $f$). The so-called \emph{stochastic model} of $f$ is built upon the random sets $E_{N,p},H_{N,p}\subset\R^n\times\R_+$ given by \begin{equation}\label{e:stoch_model_convexhull_p<=0}
    E_{N,p} =  \begin{cases} 
      \conv\{R_{Z_1^p}(X_1),\dots,R_{Z_N^p}(X_N)\} & \mathrm{if}\quad p< 0, \\
       \conv\{R_{-\log Z_1}(X_1),\dots,R_{-\log Z_N}(X_N)\}& \mathrm{if}\quad p=0 
   \end{cases}
\end{equation}
and
\begin{equation}\label{e:stoch_model_convexhullp>0}
     H_{N,p} = \conv\{\widetilde{R}_{Z_1^p}(X_1),\dots,\widetilde{R}_{Z_N^p}(X_N)\}
\end{equation}
if $p>0$, where for any given  $(x,z)\in \R^n\times \R_+$ the sets $R_{z}(x)$ and $\widetilde{R}_{z}(x)$ are defined as 
$$
R_{z}(x) =\{(x,r) : r\geq z\} \quad \mathrm{and} \quad \widetilde{R}_z(x) = \{(x,r): r\leq z\}.
$$ 

Specifically, the stochastic model of $f$ constructed from the random vectors $\{(X_k,Z_k)\}_{k=1}^N$, denoted as $\Phi^N_{(X_k,Z_k)}$, is the $p$-concave function defined via epigraphs and hypographs as follows
\begin{align}\label{e:random_epi-hyp}
    &\mathrm{epi}\bigl((\Phi^N_{(X_k,Z_k)})^p\bigr) =E_{N,p}  \quad \mathrm{if} \quad p<0,\\
    &\mathrm{epi}(-\log \Phi^N_{(X_k,Z_k)}) =E_{N,p} \quad \mathrm{if} \quad p=0,\\
    & \mathrm{hyp}\bigl((\Phi^N_{(X_k,Z_k)})^p\bigr) = H_{N,p}\quad \mathrm{if} \quad p>0.
\end{align} 
Equivalently, defining
\begin{equation}\label{e:stoch_model_convexhull_2_p<=0}
    e_{N,p} =  \begin{cases} 
      \conv\{(X_1,Z_1^p),\dots,(X_N,Z_N^p)\} & \mathrm{if}\quad p< 0, \\
       \conv\{(X_1,-\log Z_1),\dots,(X_N,-\log Z_N)\}& \mathrm{if}\quad p=0,
   \end{cases}
\end{equation}
and
\begin{equation}\label{e:stoch_model_convexhull_2_p>0}
     h_{N,p} = \conv\{(X_1,Z_1^p),\dots,(X_N,Z_N^p)\}
\end{equation}
when $p>0$, we have that
\begin{align}\label{e:stoch_model}
    \Phi^N_{(X_k,Z_k)}(x) =\begin{cases}
    \inf\{z^{1/p} : (x,z) \in e_{N,p}\}, &\mathrm{if}\quad p<0\\
    \sup\{e^z : (x,z)\in e_{N,p}\}, & \mathrm{if} \quad p=0\\
    \sup\{z^{1/p} : (x,z) \in h_{N,p}\} & \mathrm{if} \quad p>0.
\end{cases} 
\end{align}

\begin{figure}[htbp]
    \centering
    \begin{subfigure}[b]{0.45\textwidth}
        \centering
        \includegraphics[width=\textwidth]{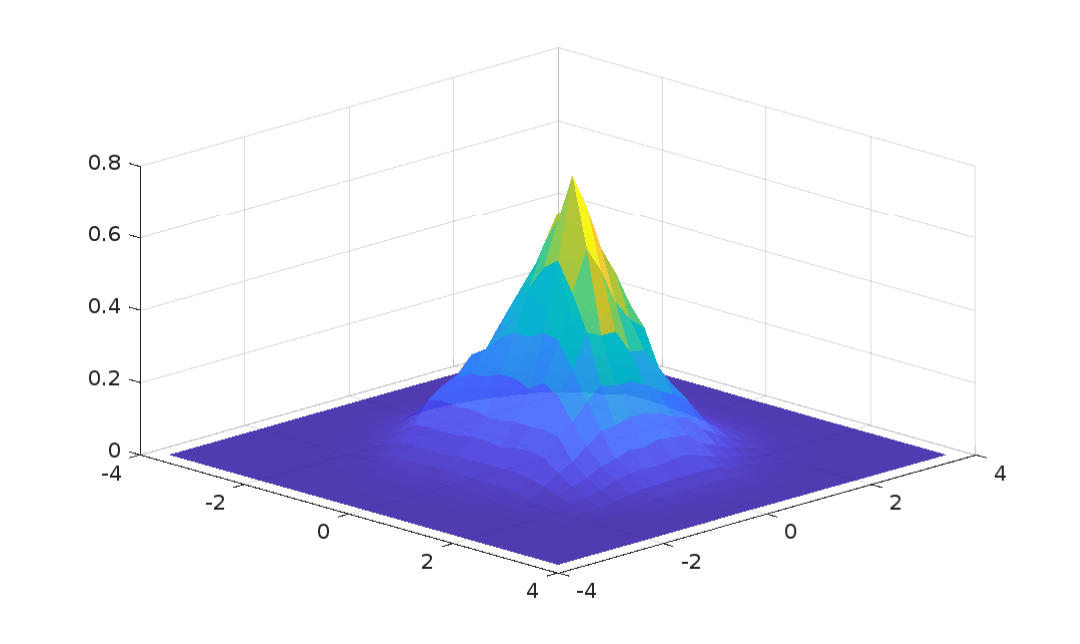}
        \caption{$N=1000$}
        \label{fig:image1}
    \end{subfigure}
    \hfill
    \begin{subfigure}[b]{0.45\textwidth}
        \centering
        \includegraphics[width=\textwidth]{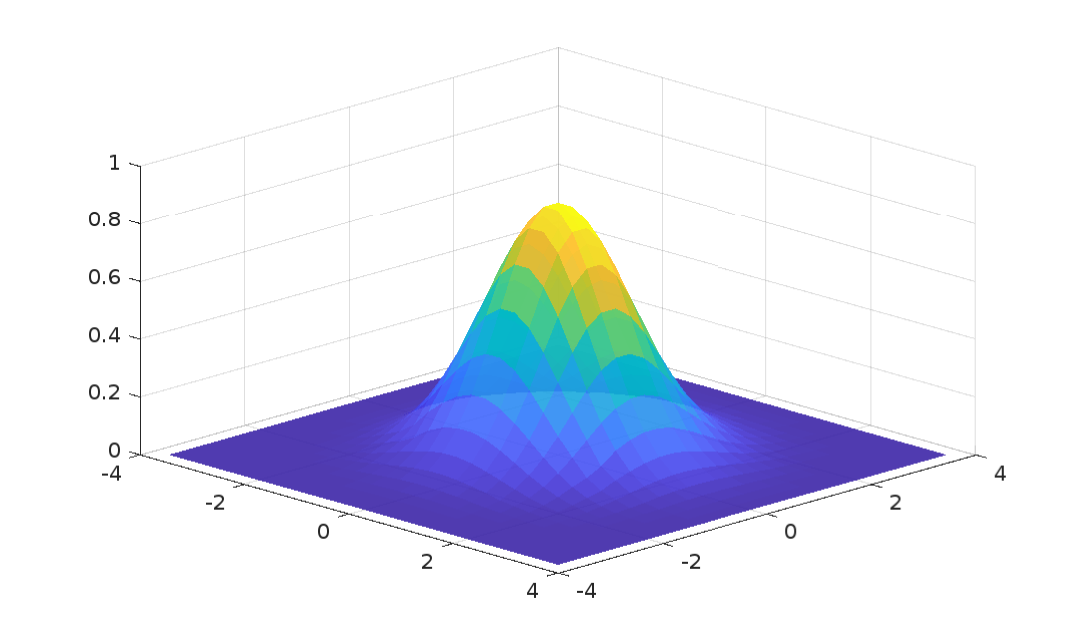}
        \caption{$N=100000$}
        \label{fig:image2}
    \end{subfigure}
    \caption{Comparative of two stochastic models of a  bidimensional  centered gaussian}
    \label{fig:two_images}
\end{figure}
Our first main result reads as follows.
\begin{theorem}\label{t:stoch_iso_quermass}
      Let $f:\R^n \to \R_+$ be an integrable $p$-concave function and $\{(X_k,Z_k)\}_{k=1}^N$ be independent random vectors distributed w.r.t. $f$. Then, for every $i\in\{0,\dots, n-1\}$
     $$
     \E\left[W_i\Bigl(\Phi^N_{(X_k,Z_k)}\Bigr)\right]\geq \E\left[W_i\Bigl(\Phi^N_{(X^*_k,Z^*_k)}\Bigr)\right],
     $$
       where $\Phi^N_{(\cdot,\cdot)} : \R^n\to\R_+$ is a stochastic model  and $\{(X_k^*,Z_k^*)\}_{k=1}^N$ are independent random vectors distributed w.r.t. the symmetric decreasing rearrangement of $f$.
\end{theorem}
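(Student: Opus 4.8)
The plan is to deduce the stated inequality from the corresponding one for a single Steiner symmetrization, realized at the level of the random model as a shadow system, and then to iterate. Throughout, write $\mu(g):=\E\bigl[W_i(\Phi^N_{(Y_k,W_k)})\bigr]$ for the expected $i$-th quermassintegral of the stochastic model built from points $\{(Y_k,W_k)\}_{k=1}^N$ distributed w.r.t.\ an integrable $p$-concave function $g$; the theorem asserts $\mu(f)\ge\mu(f^*)$. I will describe the argument for $p>0$; the cases $p=0$ and $p<0$ are identical after replacing $\mathrm{hyp}(g^p)$ by $\mathrm{epi}(-\log g)$, resp.\ $\mathrm{epi}(g^p)$, and "upper envelope" by "lower envelope".

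Two structural facts drive the argument. First, for a unit vector $u\in\R^n$ the Steiner symmetral $S_ug$ satisfies $(S_ug)^p=S_u(g^p)$ and $\mathrm{hyp}\bigl((S_ug)^p\bigr)=S_u\bigl(\mathrm{hyp}(g^p)\bigr)$, the latter being the Steiner symmetrization of the convex body $\mathrm{hyp}(g^p)\subset\R^n\times\R_+$ with respect to $u^{\perp}\times\R$; this is immediate from $\{S_ug\ge t\}=S_u\{g\ge t\}$, and in particular $S_ug$ is again integrable and $p$-concave. Second, if $(Y,W)$ is uniform on $\mathrm{hyp}(g)$, then the transformed point $(Y,W^p)$ has density proportional to $w^{1/p-1}$ on $\mathrm{hyp}(g^p)$ — a density depending only on the height coordinate $w$ and not on $g$ (for $p=0$ one gets $\propto e^{-w}$ on $\mathrm{epi}(-\log g)$, and for $p<0$ again $\propto w^{1/p-1}$ on $\mathrm{epi}(g^p)$). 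Hence the transformed sample points feeding $\Phi^N$ are always governed by the same height-only density over the relevant convex body.

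Now fix $u$ and condition on a sample $\{(X_k,Z_k)\}$ drawn uniformly from $\mathrm{hyp}(f)$. Put $v_k=(X_k,Z_k^p)\in\mathrm{hyp}(f^p)$ and let $\beta_k$ be minus the sum of the two $u$-endpoints of the chord of $\mathrm{hyp}(f^p)$ through $v_k$ in direction $u$. Then $\{\conv\{v_k+\lambda\beta_k u\}:\lambda\in[0,1]\}$ is a shadow system of polytopes in $\R^n\times\R_+$ along $u$; denoting by $\Phi_\lambda$ the $p$-concave function built from it as in \eqref{e:stoch_model}, each super-level set $\{\Phi_\lambda\ge t\}$ is the image of a fixed convex body $\widehat{C}_t\subset\R^n\times\R$ under the map $(x,a)\mapsto x+\lambda a u$, so $\{\{\Phi_\lambda\ge t\}\}_{\lambda\in[0,1]}$ is itself a shadow system of convex bodies in $\R^n$. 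Combining the description of functional quermassintegrals in terms of their super-level sets from \cite{BCF14,MilRot13} with the well-known convexity of quermassintegrals of convex bodies along shadow systems, one gets that $\lambda\mapsto W_i(\Phi_\lambda)$ is convex on $[0,1]$ for every realization of the sample. At $\lambda=0$, $\Phi_0$ is exactly the stochastic model of $f$. Since the shadow movement acts within each $u$-chord of $\mathrm{hyp}(f^p)$ by a translation, preserving the height coordinate, the second fact above shows that $\Phi_{1/2}$ has the law of the stochastic model of $S_uf$ and $\Phi_1$ the law of the reflection in $u^{\perp}$ of the model of $f$. Taking expectations in the pointwise convexity and using that $W_i$ is invariant under reflections,
\[
\mu(S_uf)=\E\bigl[W_i(\Phi_{1/2})\bigr]\le\tfrac12\E\bigl[W_i(\Phi_0)\bigr]+\tfrac12\E\bigl[W_i(\Phi_1)\bigr]=\mu(f).
\]

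Finally I would choose a sequence of directions $(u_m)$ with $f_m:=S_{u_m}\cdots S_{u_1}f\to f^*$ (say in $L^1$, with all super-level sets converging), so that $\mu(f)=\mu(f_0)\ge\mu(f_1)\ge\cdots$, and pass to the limit: a standard lower-semicontinuity argument — Fatou's lemma, after coupling the samples so that the model of $f_m$ converges to the model of $f^*$, together with a uniform integrability bound for $W_i$ of the stochastic model — gives $\mu(f^*)\le\liminf_m\mu(f_m)\le\mu(f)$, which is the theorem. The main obstacle is exactly the conjunction that powers the one-step inequality: checking that the affine motion of the sample points in $\R^{n}\times\R_+$ genuinely descends, after passing to the upper (resp.\ lower) envelope of their convex hull and slicing, to a shadow system of the super-level sets in $\R^n$ — so that convexity of the quermassintegral may be invoked level-by-level and then integrated — and that $\Phi_{1/2}$ is identified \emph{exactly} with the model of $S_uf$, which hinges on the height-only form of the transformed-sample density. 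By comparison, the limiting step and the reflection invariance are routine.
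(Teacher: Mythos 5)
Your argument is correct and rests on the same engine as the paper's proof: the observation that translating the lifted sample points $(X_k,Z_k^p)$ linearly in a fixed horizontal direction $u$ induces, slice by slice, a linear parameter system (shadow system) of the super-level sets of the model, so that $W_i(\Phi_\lambda)=\int_0^{+\infty}W_i(\{\Phi_\lambda\geq t\})\,\dlat t$ is convex in the parameter. This is exactly the content of Lemmas \ref{l:level_sets_lps} and \ref{l:epi_is_lps}. Where you diverge is in how the rearrangement is then extracted. The paper conditions on the heights $\bar z$ via Fubini, so that only the spatial indicators $\chi_{_{\{f\geq z_i\}}}$ need to be rearranged, and then invokes Christ's theorem (Theorem \ref{t:Christ_BL}, in its reversed, quasi-convex form) as a black box; this sidesteps both of the points you single out as delicate. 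You instead build an explicit coupling: the height-only form of the density of $(X_k,Z_k^p)$ plus the chord-centering map identify $\Phi_{1/2}$ in law with the model of $S_uf$ and $\Phi_1$ with a reflected copy of the model of $f$, giving the one-step inequality $\mu(S_uf)\leq\mu(f)$ directly, after which you iterate and pass to the limit. Both identifications check out (the chord translation preserves the height coordinate and one-dimensional measure on each fibre, and $S_u$ commutes with taking hypographs/epigraphs of $f^p$ because $\{S_uf\geq t\}=S_{u^\perp}\{f\geq t\}$), and the reflection invariance of $W_i$ closes the midpoint-convexity step. The only place where you still owe details is the final limit $\mu(f^*)\leq\liminf_m\mu(f_m)$: since $W_i(\Phi^N_{(x_k,z_k)})$ is a fixed measurable function of the sample and $\prod_k\chi_{_{\mathrm{hyp}(f_m)}}\to\prod_k\chi_{_{\mathrm{hyp}(f^*)}}$ in $L^1$, this follows from a domination argument, but for $p\leq 0$ the supports may be unbounded and one should first truncate $f$ to $f_\varepsilon=f\chi_{_{\{f\geq\varepsilon\}}}$ (as the paper does in Section \ref{s:deterministic}) or otherwise justify the uniform integrability you invoke; the paper's route through Christ's theorem applies the iteration only to compactly supported indicators of level sets and so never meets this issue.
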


The case of $i=1$  in Theorem \ref{t:func_iso_quermass} is precisely the Sobolev inequality in $\R^n$ for functions of bounded variation (see e.g. \cite[Chapter~3]{AFP} for background), which is known to hold without the $p$-concavity assumption. In more detail, under the hypotheses of Theorem \ref{t:func_iso_quermass}, we have  that
\begin{equation}\label{e:classical_Sobolev}
    \mathrm{Per}(f) \geq \mathrm{Per}(f^*),
\end{equation}
where $\mathrm{Per}(f) = nW_1(f)$. Thus, the case of $i=1$ in Theorem \ref{t:stoch_iso_quermass} yields a stochastic form of the classical Sobolev inequality in the class of $p$-concave functions.
\begin{corollary}\label{c:stoc_sobolev}
    Let $f:\R^n \to \R_+$ be an integrable $p$-concave function and $\{(X_k,Z_k)\}_{k=1}^N$ be independent random vectors distributed w.r.t. $f$. Then
    $$
    \E\left[\mathrm{Per}\Bigl(\Phi^N_{(X_k,Z_k)}\Bigr)\right]\geq \E\left[\mathrm{Per}\Bigl(\Phi^N_{(X^*_k,Z^*_k)}\Bigr)\right],
    $$
     where $\Phi^N_{(\cdot,\cdot)} : \R^n\to\R_+$ is a stochastic model  and $\{(X_k^*,Z_k^*)\}_{k=1}^N$ are independent random vectors distributed w.r.t. the symmetric decreasing rearrangement of $f$.
\end{corollary}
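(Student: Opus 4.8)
The plan is to obtain Corollary \ref{c:stoc_sobolev} as the immediate specialization to $i=1$ of Theorem \ref{t:stoch_iso_quermass}. First I would recall the normalization $\mathrm{Per}(g)=n\,W_1(g)$ recorded just above the statement, which is valid for every integrable $p$-concave function $g$; in particular, for each fixed realization of the random vectors the random functions $\Phi^N_{(X_k,Z_k)}$ and $\Phi^N_{(X^*_k,Z^*_k)}$ are integrable $p$-concave functions (this is exactly what makes $W_1$ of them meaningful in Theorem \ref{t:stoch_iso_quermass}), so that pointwise in the probability space one has $\mathrm{Per}\bigl(\Phi^N_{(X_k,Z_k)}\bigr)=n\,W_1\bigl(\Phi^N_{(X_k,Z_k)}\bigr)$ and likewise for the rearranged data.

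Next I would take expectations of this pointwise identity and use linearity of $\E$ to reduce the asserted inequality to
$$
\E\left[W_1\bigl(\Phi^N_{(X_k,Z_k)}\bigr)\right]\;\geq\;\E\left[W_1\bigl(\Phi^N_{(X^*_k,Z^*_k)}\bigr)\right],
$$
which is precisely the case $i=1$ of Theorem \ref{t:stoch_iso_quermass}, available for all $n\geq 2$; for $n=1$ the perimeter functional is trivial and the inequality degenerates to an equality, so there is nothing to prove. Multiplying through by $n$ then yields the claim.

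Since the deduction is purely formal I do not anticipate a genuine obstacle: the only points requiring attention are bookkeeping ones already settled by the framework underlying Theorem \ref{t:stoch_iso_quermass}, namely the measurability of the map assigning to a realization the value $W_1\bigl(\Phi^N_{(X_k,Z_k)}\bigr)$, the finiteness of the two expectations, and the fact that the symmetric decreasing rearrangement $f^*$ is again an integrable $p$-concave function so that $\Phi^N_{(X^*_k,Z^*_k)}$ is well defined. One could also remark that, letting $N\to\infty$, Corollary \ref{c:stoc_sobolev} recovers the deterministic Sobolev inequality \eqref{e:classical_Sobolev} within the class of $p$-concave functions.
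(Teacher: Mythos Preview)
Your proposal is correct and mirrors the paper's own treatment: the corollary is stated immediately after noting that $\mathrm{Per}(f)=nW_1(f)$, and the paper likewise obtains it simply as the case $i=1$ of Theorem~\ref{t:stoch_iso_quermass} with no further argument.
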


An affine strengthening of \eqref{e:classical_Sobolev} is Zhang's affine Sobolev inequality, established by G. Zhang in \cite{Z99} for $C^1$ functions and later extended to functions of bounded variation by T. Wang in \cite{TW12}. G. Zhang proved that this inequality is equivalent to the Petty projection inequality \cite{CMP71} for convex bodies:
\begin{equation}\label{e:classical_petty}
|\pp K|\leq |\pp B_K|,
\end{equation}
where $\pp$ denotes the polar projection body operator and $|\cdot|$ stands for the usual Lebesgue measure (or \emph{volume}). Very recently, G. Paouris, P. Pivovarov and K. Tatarko \cite{PPT} have shown that Petty's inequality \eqref{e:classical_petty} also holds in a stochastic form. This result, combined with Corollary \ref{c:stoc_sobolev}, motivates our second main objective: to establish that, within the class of $p$-concave functions,  Zhang's affine Sobolev inequality holds in expectation for the stochastic models defined in \eqref{e:stoch_model}.

To achieve this goal, we employ the notion of functional polar projection bodies, which originates from ideas of E. Lutwak, D. Yang and G. Zhang \cite{LYZ06,LYZ02} (see also \cite{ML12,HL22,HL24}). This concept was also formally introduced in \cite{CLM17_2,AGJV18} (see Section \ref{s:background} for a precise definition). Within this framework, Zhang's affine Sobolev inequality takes the following form. Note that, as mentioned, this inequality is known to hold without the $p$-concavity assumption.
\begin{thm}\label{t:affine_sobolev}
    Let $f : \R^n \to \R_+$ be an integrable $p$-concave function. Then 
    $$
    |\pp f| \leq |\pp f^*|,
    $$
     where $f^*$ denotes the symmetric decreasing rearrangement of $f$.
\end{thm}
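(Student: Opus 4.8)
\emph{Plan.} The plan is to reduce the statement to the classical Petty projection inequality \eqref{e:classical_petty} applied to the super-level sets of $f$, the passage from the level sets back to $f$ being controlled by a reverse Minkowski-type integral inequality on the sphere. This is the functional mechanism behind Zhang's original argument, and I would adapt it to the $p$-concave setting, where the super-level sets are genuinely convex. First I would record the level-set representation of $\pp f$. Since $f$ is $p$-concave it is quasi-concave, so for a.e. $t>0$ the super-level set $K_t:=\{x:f(x)\ge t\}$ is either empty or a convex body, bounded because it is a convex set of finite volume (the integrability of $f$), and $f$ is locally Lipschitz on the interior of its support. Hence, either directly from the definition of $\pp f$ in Section~\ref{s:background} or via the coarea formula, the solid projection body $\Pi f$ is the Minkowski integral of the projection bodies of the $K_t$, i.e.\ $h_{\Pi f}(\theta)=c_n\int_0^\infty h_{\Pi K_t}(\theta)\,dt$ for every $\theta\in\s^{n-1}$, with $\Pi(\cdot)$ the usual projection body of a convex body and $c_n>0$ a dimensional normalization. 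The same formula applies to $f^*$, whose super-level sets are precisely the centered Euclidean balls $K_t^*:=(K_t)^*$ of volume $|K_t|$; since the projection body of a ball is a centered ball, $\Pi f^*$ is itself a centered Euclidean ball.

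Next I would run the inequality chain. Using $|L^\circ|=\tfrac1n\int_{\s^{n-1}}h_L(\theta)^{-n}\,d\theta$ with $L=\Pi f$ and inserting the representation above, $|\pp f|^{-1/n}$ is, up to the factor $c_n$, the $L^{-n}(\s^{n-1})$ ``norm'' of $\theta\mapsto\int_0^\infty h_{\Pi K_t}(\theta)\,dt$. By the integral form of Minkowski's inequality with exponent $-n$ — which, being negative, reverses, so that the $L^{-n}$-norm of an integral dominates the integral of the $L^{-n}$-norms — one gets
$$|\pp f|^{-1/n}\ \ge\ c_n\int_0^\infty|\Pi^\circ K_t|^{-1/n}\,dt.$$
Applying the Petty projection inequality \eqref{e:classical_petty} to each convex body $K_t$ gives $|\Pi^\circ K_t|\le|\Pi^\circ K_t^*|$, hence $|\Pi^\circ K_t|^{-1/n}\ge|\Pi^\circ K_t^*|^{-1/n}$, and therefore
$$|\pp f|^{-1/n}\ \ge\ c_n\int_0^\infty|\Pi^\circ K_t^*|^{-1/n}\,dt.$$
Finally, since $h_{\Pi K_t^*}$ is constant on $\s^{n-1}$ for every $t$, the Minkowski integral inequality above becomes an equality when applied to $f^*$, so running the same computation for $f^*$ yields $|\pp f^*|^{-1/n}=c_n\int_0^\infty|\Pi^\circ K_t^*|^{-1/n}\,dt$. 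Combining the last two displays gives $|\pp f|^{-1/n}\ge|\pp f^*|^{-1/n}$, i.e.\ $|\pp f|\le|\pp f^*|$, as claimed.

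\emph{Main obstacle.} The genuinely delicate point is the level-set (coarea) representation of $\pp f$ in the stated generality: one must justify it near the boundary of $\supp f$ — where, for $p<0$, $f$ may be unbounded — discard the negligible set of $t$ for which $K_t$ is lower-dimensional, and check measurability/integrability of $t\mapsto h_{\Pi K_t}(\theta)$ so that Minkowski's integral inequality applies; one should also allow $\int_0^\infty|\Pi^\circ K_t^*|^{-1/n}\,dt=+\infty$, in which case $\pp f^*=\{0\}$ and the inequality is trivial. The remaining ingredients (reverse Minkowski plus slice-wise Petty, with equality for the radial profile) are standard. Note that $p$-concavity is used only to make the $K_t$ honest convex bodies so that \eqref{e:classical_petty} can be invoked slice by slice; replacing convex bodies by sets of finite perimeter and Petty's inequality by its finite-perimeter counterpart recovers the statement without the $p$-concavity assumption, in line with the remark preceding the theorem.
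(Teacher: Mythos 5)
Your argument is correct, but it is a genuinely different route from the one the paper takes. The paper never proves Theorem~\ref{t:affine_sobolev} directly: it obtains it (in the stronger form \eqref{e:affine_Sobolev_convex_measures}, for arbitrary rotationally invariant convex measures) as the $N\to+\infty$ limit of the stochastic inequality of Theorem~\ref{t:stoc_affine_sobolev}, using the truncation $f_\varepsilon$, the almost sure Hausdorff convergence $\pp\Phi^N_{(X_k,Z_k)}\overset{\delta^H}{\to}\pp f$ of Proposition~\ref{p:convergence_polar_projection_body}, and dominated convergence; the stochastic inequality itself rests on Christ's rearrangement inequality and convexity of mixed volumes along linear parameter systems. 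Your proof is the classical deterministic mechanism (Zhang's): slice-wise Petty on the superlevel sets $K_t$ plus the superadditivity of the $L^{-n}(\s^{n-1})$ ``norm'' (reverse Minkowski integral inequality for a negative exponent), with equality in that step for $f^*$ because $h_{\p K_t^*}$ is constant on the sphere. Two remarks. First, your ``main obstacle'' largely evaporates here: the identity $h_{\p f}(u)=\int_0^{+\infty}h_{\p\{f\ge t\}}(u)\,\dlat t$ is the paper's \emph{definition} of $\pp f$ (with $c_n=1$), so no coarea argument is needed; one only has to note that the $t$ for which $K_t$ is lower-dimensional contribute $\|h_{\p K_t}\|_{-n}=0$ on the right-hand side and that Petty degenerates harmlessly there. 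Second, the trade-off: your argument is short, self-contained given Petty's inequality, and (as you note) survives dropping $p$-concavity; but because it hinges on the specific formula $|L^\circ|=\tfrac1n\int_{\s^{n-1}}h_L^{-n}\,\dlat\theta$, it does not yield the convex-measure generalization \eqref{e:affine_Sobolev_convex_measures} nor any finite-$N$ stochastic dominance, which is precisely what the paper's heavier machinery buys.
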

Our second main result reads as follows. Note that rather than the Lebesgue measure we consider any rotationally invariant \emph{convex measure} on $\R^n$, i.e., an absolutely continuous measure with (rotationally invariant) $(-1/n)$-concave density. 
\begin{theorem}\label{t:stoc_affine_sobolev}
    Let $f:\R^n \to \R_+$ be an integrable $p$-concave function and $\{(X_k,Z_k)\}_{k=1}^N$ be independent random vectors distributed w.r.t. $f$.  Then, for any rotationally invariant convex measure $\nu$ on $\R^n$,
    $$
    \E\left[\nu\bigl(\pp \Phi^N_{(X_k,Z_k)}\bigr)\right] \leq \E\left[\nu\bigl(\pp \Phi^N_{(X^*_k,Z^*_k)}\bigr)\right],
    $$
    where $\Phi^N_{(\cdot,\cdot)} : \R^n\to\R_+$ is a stochastic model and $\{(X_k^*,Z_k^*)\}_{k=1}^N$ are independent random vectors distributed w.r.t. the symmetric decreasing rearrangement of $f$.
\end{theorem}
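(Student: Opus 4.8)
The plan is to deduce the statement from the stochastic Petty projection inequality of \cite{PPT}, after lifting the problem to $\R^{n+1}$. Consider first $p>0$. The starting point is the identity
\begin{equation*}
\pp g \;=\; \pp\bigl(\widehat C_g\bigr)\cap\bigl(\R^n\times\{0\}\bigr),
\end{equation*}
valid for an integrable $p$-concave function $g$, where $\widehat C_g=\{(x,t)\in\R^n\times\R_+:t\le g(x)^{p}\}$ is the convex body lift of $g$ (bounded, since $g^{p}$ is then a nonnegative concave function of bounded support) and, on the right-hand side, $\pp$ denotes the polar projection body of a convex set. This identity is essentially the definition of the functional polar projection body in the $p$-concave setting (Section~\ref{s:background}); it rests on the elementary fact $(P_{\R^n}M)^{\circ}=M^{\circ}\cap\R^n$ for the orthogonal projection $P_{\R^n}$ onto $\R^n\times\{0\}$, combined with the coarea formula, which exhibits $\Pi g$ as a (weighted) Minkowski integral of the projection bodies of the level sets of $g$, equivalently $h_{\Pi g}(\theta)=h_{\Pi\widehat C_g}(\theta,0)$ up to a fixed normalization. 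Introduce the measure $\widehat\nu$ on $\R^{n+1}$ by $\widehat\nu(A):=\nu\bigl(A\cap(\R^n\times\{0\})\bigr)$; it is a convex measure on $\R^{n+1}$, invariant under the rotations of $\R^{n+1}$ fixing the last coordinate axis. Since moreover $\widehat C_{\Phi^N_{(X_k,Z_k)}}$ is exactly the random polytope $H_{N,p}$ of \eqref{e:stoch_model_convexhullp>0}, the assertion of the theorem for $p>0$ is equivalent to
\begin{equation*}
\E\bigl[\widehat\nu\bigl(\pp H_{N,p}\bigr)\bigr]\;\le\;\E\bigl[\widehat\nu\bigl(\pp H_{N,p}^{*}\bigr)\bigr],
\end{equation*}
where $H_{N,p}^{*}$ is defined exactly as $H_{N,p}$, but from the sample $\{(X^*_k,Z^*_k)\}_{k=1}^N$.

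To prove this, condition on the heights $(Z_1,\dots,Z_N)$. Their joint law depends on $f$ only through $t\mapsto|\{f\ge t\}|$, hence is unchanged when $f$ is replaced by $f^{*}$; and conditionally on the heights the $X_k$ are independent, $X_k$ being uniform on the level set $\{f\ge Z_k\}$ while $X_k^{*}$ is uniform on $\{f\ge Z_k\}^{*}$, the centred Euclidean ball of the same volume. It therefore suffices to show, for every fixed choice of heights, that replacing independent points uniform on convex bodies $K_1,\dots,K_N\subset\R^n$ — each lifted to the prescribed height and adjoined to its orthogonal projection onto $\R^n\times\{0\}$ — by independent points uniform on the Euclidean balls $K_1^{*},\dots,K_N^{*}$ can only increase $\E\bigl[\widehat\nu\bigl(\pp\conv(\cdot)\bigr)\bigr]$. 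This is a case of the stochastic Petty projection inequality of \cite{PPT}: the random polytope lives in $\R^{n+1}$, only the first $n$ coordinates of its vertices are rearranged to balls, and $\widehat\nu$ is a convex measure invariant under the rotations fixing the complementary axis, so what is used is the subspace-equivariant form of their result (together with a routine mollification of $\widehat\nu$ in the last variable, which reduces to the absolutely continuous case treated in \cite{PPT}). Integrating back over the heights gives the theorem for $p>0$.

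The cases $p\le 0$ are treated in the same spirit, working now with the sets of \eqref{e:stoch_model_convexhull_p<=0} (the relevant lift being the epigraph of $g^{p}$ when $p<0$ and of $-\log g$ when $p=0$); the one difference is that these lifts are unbounded, so one first truncates at a large height $t=M$, runs the argument, and lets $M\to\infty$, the integrability of $f$ ensuring that the truncated models converge to the original ones. Finally, letting $N\to\infty$ one has $\Phi^N_{(X_k,Z_k)}\to f$ and $\Phi^N_{(X^*_k,Z^*_k)}\to f^{*}$ almost surely, so the inequality degenerates to $\nu(\pp f)\le\nu(\pp f^{*})$ for every rotationally invariant convex measure $\nu$, recovering and extending Theorem~\ref{t:affine_sobolev}, as it should.

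The main obstacle is making the reduction to \cite{PPT} rigorous: one needs their stochastic Petty projection inequality in the form where the rearrangement acts only on a coordinate subspace — namely the one dictated by the symmetric decreasing rearrangement of $f$, which ball-symmetrizes each level set of $f$ rather than symmetrizing the lift radially in $\R^{n+1}$ — and where the measure is the correspondingly degenerate, merely cylindrically symmetric convex measure $\widehat\nu$. If \cite{PPT} is not stated at this level of generality, the required version has to be distilled from its proof, whose underlying Busemann-type inequalities are compatible with symmetrization inside a coordinate subspace; this is routine but must be checked carefully. A secondary point, already noted, is the limiting argument for $p\le 0$, where the natural lift fails to be bounded.
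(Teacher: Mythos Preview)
Your reduction has two genuine gaps.

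First, the identity $h_{\p g}(\theta)=h_{\p\widehat C_g}(\theta,0)$ fails for the lift $\widehat C_g=\mathrm{hyp}(g^p)$. Fubini gives
\[
h_{\p\widehat C_g}(\theta,0)=\bigl|P_{(\theta,0)^\perp}\widehat C_g\bigr|_{n}=\int_0^\infty\bigl|P_{\theta^\perp}\{g^p\ge s\}\bigr|_{n-1}\,ds=p\int_0^\infty t^{p-1}\bigl|P_{\theta^\perp}\{g\ge t\}\bigr|_{n-1}\,dt,
\]
which carries the weight $pt^{p-1}$ and is not a fixed scalar multiple of $h_{\p g}(\theta)=\int_0^\infty|P_{\theta^\perp}\{g\ge t\}|_{n-1}\,dt$ unless $p=1$. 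The lift for which the identity \emph{is} correct is $\mathrm{hyp}(g)$, but then $\widehat C_{\Phi^N}$ is no longer the polytope $H_{N,p}=\mathrm{hyp}\bigl((\Phi^N)^p\bigr)$; it is the curved body obtained by raising the upper boundary of $H_{N,p}$ to the power $1/p$, and \cite{PPT} gives no handle on $\pp$ of such sets.

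Second --- and you flag this yourself, but underestimate it --- the version of \cite{PPT} you invoke is not a corollary of their stated theorem but a genuine extension: a singular, merely cylindrically symmetric measure $\widehat\nu$; symmetrization restricted to a coordinate subspace; and a random set that is the hull of vertical \emph{segments} with coupled endpoints $(X_k,0)$ and $(X_k,z_k^p)$, not of independent points. ``Distilling it from the proof'' means re-running the shadow-system argument of \cite{PPT} along directions in $\R^n\times\{0\}$, checking joint convexity of the relevant mixed volume in the sliding parameters, and applying Borell--Brascamp--Lieb to convert the $(-1/n)$-concave density into a quasi-concave one-dimensional marginal. That is exactly the paper's direct proof: it works intrinsically with the functional mixed volume $V(\Phi^N[n-1],\chi_{[0,u]})$, shows via Lemma~\ref{l:level_sets_lps} and Theorem~\ref{t:convexity_Mixed_Volumes)} that this quantity is jointly convex as the sample points and $u$ slide along a common line, applies Corollary~\ref{c:marginals} to obtain the quasi-concave integrand required by \eqref{e:BBL_Steiner}, and iterates Steiner symmetrizations. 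Your lifting strategy, once repaired, would retrace these steps rather than bypass them.
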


We would like to remark that the proof of Theorem \ref{t:stoc_affine_sobolev} follows the approach of \cite{PPT}, which itself builds upon the ideas presented in \cite[Section~8.2]{MilYe}. For our purposes, we employ an interpretation of the functional polar projection bodies using the functional mixed volumes of V. Milman and L. Rotem \cite{MilRot13}. Furthermore, Theorem~\ref{t:stoch_iso_quermass} recovers the geometric counterparts established by R. E. Pfiefer \cite{Pfiefer82} and by M.~Hartzoulaki and G.~Paouris \cite{HarPao}, while Theorem~\ref{t:stoc_affine_sobolev} recovers that of G. Paouris, P. Pivovarov, and K. Tatarko \cite{PPT} (see Remark~\ref{r:geometrical}).

Given these connections, it is natural to ask whether Theorems \ref{t:func_iso_quermass} and \ref{t:affine_sobolev} can also be recovered from their stochastic versions. We conclude by addressing this question, noting first that the stochastic models are well-defined for integrable functions, without requiring any concavity assumption. However, the resulting random sets (see \eqref{e:random_epi-hyp}) need not approximate the original function unless it is $p$-concave. Hence, the $p$-concavity hypothesis in Theorems~\ref{t:stoch_iso_quermass} and~\ref{t:stoc_affine_sobolev} is imposed to ensure that the deterministic inequalities can be recovered from their stochastic counterparts.

As a consequence of the latter investigation, we obtain the following generalization of Theorem \ref{t:affine_sobolev}.
\begin{theorem}
    Let $f : \R^n \to \R_+$ be an integrable $p$-concave function. Then, for any rotationally invariant convex measure $\nu$ on $\R^n$,
    \begin{equation}\label{e:affine_Sobolev_convex_measures}
        \nu(\pp f) \leq \nu(\pp f^*),
    \end{equation}
     where $f^*$ denotes the symmetric decreasing rearrangement of $f$.
\end{theorem}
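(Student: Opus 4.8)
The plan is to obtain \eqref{e:affine_Sobolev_convex_measures} as the $N\to\infty$ limit of the stochastic inequality in Theorem~\ref{t:stoc_affine_sobolev}. Fix the integrable $p$-concave function $f$ and the rotationally invariant convex measure $\nu$, and couple the samples: choose once and for all an infinite sequence $\{(X_k,Z_k)\}_{k\ge 1}$ of independent vectors distributed w.r.t.\ $f$ and an infinite sequence $\{(X_k^*,Z_k^*)\}_{k\ge 1}$ of independent vectors distributed w.r.t.\ $f^*$, and for each $N\ge n+1$ build the stochastic models $\Phi^N_{(X_k,Z_k)}$ and $\Phi^N_{(X_k^*,Z_k^*)}$ from the first $N$ terms. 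Theorem~\ref{t:stoc_affine_sobolev} gives, for every $N\ge n+1$,
\[
\E\bigl[\nu\bigl(\pp\Phi^N_{(X_k,Z_k)}\bigr)\bigr]\le\E\bigl[\nu\bigl(\pp\Phi^N_{(X_k^*,Z_k^*)}\bigr)\bigr],
\]
so it suffices to show that the left-hand side tends to $\nu(\pp f)$ and the right-hand side to $\nu(\pp f^*)$.

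I would first record two structural features of the stochastic model. By construction $\Phi^N_{(X_k,Z_k)}\le f$ pointwise with $\supp\Phi^N_{(X_k,Z_k)}\subseteq\supp f$, and, with the nested coupling above, $\Phi^N_{(X_k,Z_k)}$ is nondecreasing in $N$: indeed the generating points always lie, according to the sign of $p$, in the epigraph of $f^p$, in the epigraph of $-\log f$, or in the hypograph of $f^p$, and these sets are convex, so the convex hulls $e_{N,p}$ (resp.\ $h_{N,p}$) never escape them. Next, a law-of-large-numbers argument shows that almost surely these increasing convex hulls exhaust the relevant convex set — the common law of the generating points has full support there — whence $\Phi^N_{(X_k,Z_k)}\uparrow f$ almost surely (and in $L^1$, by dominated convergence, since $\Phi^N_{(X_k,Z_k)}\le f$). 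The same holds with $f$ replaced by $f^*$. Invoking the monotonicity and continuity of the functional polar projection body operator, this forces $\pp\Phi^N_{(X_k,Z_k)}\downarrow\pp f$ and $\pp\Phi^N_{(X_k^*,Z_k^*)}\downarrow\pp f^*$ (say, in the Hausdorff metric) almost surely; and since $\nu$ is absolutely continuous and, by monotonicity, each $\pp\Phi^N_{(X_k,Z_k)}$ is contained in the fixed body $\pp\Phi^{n+1}_{(X_k,Z_k)}$ — on which $\nu$ is almost surely finite, as the finiteness of the quantities in Theorem~\ref{t:stoc_affine_sobolev} guarantees $\E\bigl[\nu(\pp\Phi^{n+1}_{(X_k,Z_k)})\bigr]<\infty$ — continuity of $\nu$ from above yields $\nu\bigl(\pp\Phi^N_{(X_k,Z_k)}\bigr)\downarrow\nu(\pp f)$ almost surely, and likewise $\nu\bigl(\pp\Phi^N_{(X_k^*,Z_k^*)}\bigr)\downarrow\nu(\pp f^*)$.

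Finally, each of these is a decreasing sequence of nonnegative random variables whose first term is integrable, so $\nu(\pp\Phi^{n+1}_{(X_k,Z_k)})$ (resp.\ $\nu(\pp\Phi^{n+1}_{(X_k^*,Z_k^*)})$) serves as a dominating function and the dominated convergence theorem lets me pass the expectation through the limit, giving $\E\bigl[\nu(\pp\Phi^N_{(X_k,Z_k)})\bigr]\to\nu(\pp f)$ and $\E\bigl[\nu(\pp\Phi^N_{(X_k^*,Z_k^*)})\bigr]\to\nu(\pp f^*)$. Letting $N\to\infty$ in the displayed inequality then yields \eqref{e:affine_Sobolev_convex_measures}, and the choice of $\nu$ equal to the Lebesgue measure recovers Theorem~\ref{t:affine_sobolev}. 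The main obstacle is the middle step: one must fix the correct mode of convergence $\Phi^N\to f$, verify the requisite monotonicity and continuity of the functional polar projection body operator so that $\pp\Phi^N\to\pp f$, and — because a convex measure need not be finite — keep all the bodies $\pp\Phi^N$ inside a fixed set on which $\nu$ is finite, so that the interchange of limit and expectation is legitimate; once this is in place, the argument is a routine passage to the limit.
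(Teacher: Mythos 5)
Your overall strategy coincides with the paper's: Theorem \ref{t:stoc_affine_sobolev} is the input, and the deterministic inequality \eqref{e:affine_Sobolev_convex_measures} is extracted by letting $N\to+\infty$; the entire content of Section \ref{s:deterministic} is the justification of the limit $\E\bigl[\nu(\pp\Phi^N_{(X_k,Z_k)})\bigr]\to\nu(\pp f)$. Where you genuinely differ is in how that limit is established. The paper first truncates to $f_\varepsilon=f\chi_{_{\{f\ge\varepsilon\}}}$ so that the random epigraphs/hypographs are convex bodies, proves a.s.\ Hausdorff convergence of these sets, transfers it to the superlevel sets via Lemma \ref{l:convergence_level_sets}, then to $\pp\Phi^N\to\pp f_\varepsilon$ via Proposition \ref{p:convergence_polar_projection_body} (continuity of mixed volumes), uses that rotationally invariant convex measures are continuous in the Hausdorff metric, and only at the end lets $\varepsilon\to0^+$. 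Your nested coupling replaces all of this by monotonicity: $\Phi^N\le f$, $\Phi^N$ nondecreasing in $N$, hence $\|u\|_{\pp\Phi^N}\uparrow\|u\|_{\pp f}$ by monotone convergence and $\pp\Phi^N\downarrow\pp f$ as a decreasing intersection of compact convex sets, so that $\nu(\pp\Phi^N)\downarrow\nu(\pp f)$ follows from measure-theoretic continuity from above rather than from Hausdorff continuity of $\nu$. This is a viable and in some respects lighter route (no truncation, no epi-convergence machinery), and the structural claims you make about the model are all correct.

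Two points, however, are not yet proofs. First, the step you yourself label ``the main obstacle'' — that the convex hulls a.s.\ exhaust the interior of $\mathrm{epi}(f^p)$ (resp.\ $\mathrm{hyp}(f^p)$, $\mathrm{epi}(-\log f)$), so that $\Phi^N\uparrow f$ a.e.\ and $\bigcap_N\pp\Phi^N=\pp f$ — is asserted rather than established; this is exactly where the paper invests its work (the Borel--Cantelli argument plus Lemma \ref{l:convergence_level_sets} and Proposition \ref{p:convergence_polar_projection_body}), and in your scheme one still has to show $\bigl|P_{u^\perp}\{\Phi^N\ge t\}\bigr|_{n-1}\uparrow\bigl|P_{u^\perp}\{f\ge t\}\bigr|_{n-1}$ for a.e.\ $t$ before monotone convergence can be applied. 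Second, your justification of the interchange of limit and expectation is incorrect: Theorem \ref{t:stoc_affine_sobolev} asserts an inequality between two expectations, not their finiteness, and $\E\bigl[\nu(\pp\Phi^{n+1}_{(X_k,Z_k)})\bigr]$ can in fact be $+\infty$ — already for $\nu$ the Lebesgue measure, the reverse Petty (Zhang) inequality gives $|\pp K|\gtrsim|K|^{-(n-1)}$, and the $(n-1)$-th power of the inverse volume of a random simplex need not be integrable. So $\nu(\pp\Phi^{n+1})$ is not a legitimate dominating function. The monotone structure can still rescue the argument: $\E\bigl[\nu(\pp\Phi^N)\bigr]$ is nonincreasing in $N$, so it suffices to verify finiteness for a single sufficiently large $N$ and then apply monotone convergence for decreasing sequences — but as written the dominated convergence step does not go through.
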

Although a proof can be given using similar ideas to those of Theorem \ref{t:stoc_affine_sobolev}, inequality \eqref{e:affine_Sobolev_convex_measures} is, to the best of our knowledge, new.

This paper is organized as follows. In Section \ref{s:background} we collect some preliminaries, background material and several tools that we will use later on, while Section \ref{s:main} is devoted to the proofs of Theorems \ref{t:stoch_iso_quermass} and \ref{t:stoc_affine_sobolev}. Finally, we show in Section \ref{s:deterministic} how to recover Theorems \ref{t:func_iso_quermass} and \ref{t:affine_sobolev} from their stochastic versions.

\section{Preliminaries}\label{s:background}
We work in the $n$-dimensional Euclidean space $\R^n$ endowed with the standard inner product $\esc{\cdot,\cdot}$. We write $e_i$ for the $i$-th canonical unit vector, and $x_{i}$ is used for the $i$-th coordinate of a vector in such a space. We denote by $B^n_2$ the $n$-dimensional Euclidean (closed) unit ball and by $\s^{n-1}$ its boundary. We use $+$ for the Minkowski sum, i.e., $A+B=\{a+b:\, a\in A, \, b\in B\}$ for any non-empty sets $A, B\subset\R^n$. Given any set $M \subset \R^n$, $\chi_{_{M}}$ denotes its characteristic function. Moreover, for any unit direction $u\in\s^{n-1}$, $u^\perp$, $P_{u^\perp}$ and $R_u$ are used for a hyperplane with normal vector $u$, the orthogonal projection onto it, and the reflection map about it, respectively. Along the paper, the $k$-dimensional Lebesgue measure of a measurable set is denoted by $|\cdot|_k$ and we will omit the index $k$ when it is equal to the dimension of the ambient space; furthermore, as usual, integrating $\dlat x$ will stand the integration with respect to the Lebesgue measure. 

Let $K\subset\R^n$ be a compact convex set, its support function $h_K$ and polar body $K^\circ$ are given by
$$
h_K(y) = \sup_{x\in K}\esc{x,y} \quad \mathrm{and}\quad K^\circ=\{x \in \R^n : h_K(x)\leq 1\}.
$$
If $K$ contains the origin in its interior, its Minkowski functional is defined as $\|x\|_{K} = \inf \{\lambda > 0 : x \in \lambda K\}$. Note that, in this case, $K = \{x \in \R^n : \|x\|_K \leq 1\}$ and $\|\cdot\|_{K^\circ} = h_K(\cdot)$. We will also recall some definitions; the projection body, $\p K$, of $K$ is the centrally symmetric convex body whose support function is given by $h_{\p K}(u) = |P_{u^\perp}K|_{n-1}$. Thus the polar projection body, denoted as $\pp K$, is merely $(\p K)^\circ$. For convex sets $K,L \subset \R^n$ the Hausdorff distance between them, denoted as $\delta^H(K,L)$ is
$$
\delta^H(K,L) = \inf\{\varepsilon>0: K \subset L + \varepsilon B_2^n, L\subset K + \varepsilon B_2^n\}.
$$
Equivalently,
$$
\delta^H(K,L) = \sup_{u\in \s^{n-1}}|h_K(u)-h_L(u)|.
$$
The following remark on the interplay between polarity and Hausdorff distance is in order; let $\{K_N\}_{N\in\N}\subset\R^n$ be a sequence of centrally symmetric convex bodies such that
$$
K_N \overset{\delta^H}{\to} K
$$
as $N \to +\infty$, where $K\subset\R^n$ is a centrally symmetric convex body. Then 
$$
K^\circ_N \overset{\delta^H}{\to} K^\circ
$$
as $N \to +\infty$.

Let $K_1,...,K_m\subset \R^n$ be convex bodies and $\lambda_1,...,\lambda_m \geq 0$. The volume of their Minkowski sum is given by
$$
|\lambda_1K_1 + \cdot \cdot \cdot+\lambda_mK_m| = \sum_{1\leq i_1,....,i_n\leq m}\lambda_{i_1}\cdot\cdot\cdot \lambda_{i_n}V(K_{i_1},...,K_{i_n}),
$$
where the coefficient $V(K_{i_1},...,K_{i_n})$ is the so-called mixed volume of $n$-tuple $(K_{i_1},...,K_{i_n})$ (see \cite[Section~5]{Sch2} for background and properties). We will use the standard abbreviation $V(K[n-k],L[k])$ when the sets appear, respectively, $n-k$ and $k$ times in the mixed volume with $k\in\{1,...,n-1\}$. In addition, for any $i\in\{1,...,n-1\}$, the $i$-th quermassintegral of $K$,  is denoted by $W_i(K) = V(K[n-i],B_2^n[i])$. Note that, with this terminology, $\|u\|_{\pp K} = nV(K[n-1],[0,u])$ for every $u\in\s^{n-1}$. Moreover, we will use that mixed volumes are continuous w.r.t. the Hausdorff distance.

We will also use the $M$-sum of sets originally introduced by R. Gardner, D. Hug and W. Weil \cite{GHW13}. Let $M\subset \R^N$ and $K_1,\dots,K_N\subset \R^n$, their $M$-combination is defined as
$$
\oplus_M(K_1,\dots,K_N) = \left\{\sum_{i=1}^N m_i x_i : x_i \in K_i, (m_1,\dots,m_N)\in M\right\}.
$$
It was proved in \cite[Theorem~6.1]{GHW13} that, if $K_1,\dots,K_N$ are convex and $M$ is a compact convex set contained in the positive orthant or centrally symmetric, then $\oplus_M(K_1,\dots,K_N)$ is convex.

We remark that the use of the $M$-addition allows us to recover certain random sets that have already been used in the literature when proving stochastic forms of isoperimetric inequalities (see e.g. \cite{PP12} and \cite{PPT}). In this regard, note that if $[x_1,\dots,x_N]$ is the $n\times N$ matrix with columns $x_i$ then
$$
\oplus_C(\{x_1\},\dots,\{x_N\}) = [x_1,\dots,x_N]C.
$$
Moreover, taking $C = \conv\{e_1,\dots,e_N\}$ one gets
$$
\oplus_C(\{x_1\},\dots,\{x_N\}) = \conv\{x_1,\dots,x_N\}.
$$

\subsection{Linear parameter systems}
We will use the notion of a \emph{linear parameter system} introduced by C. A. Rogers and G. C. Shepard in \cite{RS58:2}  (see also \cite[Section~10.4]{Sch2}). Let $K \subset \R^n$ be a convex body, and $u\in\s^{n-1}$ be a unit direction. A linear parameter system is a family of convex bodies $\{K(t)\}_{t\in I}$, where $I\subset\R$ is an interval, that can be represented as
\begin{equation}\label{e:linear_parameter_system}
    K(t) = \conv\{x_j + \lambda_j t u : j \in \mathcal{J}\}.
\end{equation}
Here $\{x_j\}_{j\in\mathcal{J}}$ and $\{\lambda_j\}_{j\in \mathcal{J}}$ are bounded sets in $\R^n$ and $\R$ respectively, and $\mathcal{J}$ is an arbitrary index set. We will also make use of the case in which the index set is a convex body $K$, i.e,
$$
K(t) = \text{conv}\{ x + t \alpha(x)u : x\in K\},
$$
where $\alpha : K\to\R$ is a bounded function. An example of this construction is the following one; let $K\subset \R^n$ be a convex body and $u \in \s^{n-1}$. Then $\{K_u(t)\}_{t\in [-1,1]}$ is the linear parameter system given by 
\begin{equation}\label{e:shadow_system}
    K_u(t)= \{y + su: y\in P_{u^\perp}K, \; s\in [f_u^t(y),-g_u^t(y)]\},
\end{equation}
with
$$
f_u^t(y) = \frac{(1+t)f_u(y) + (1-t)g_u(y)}{2}
$$
and 
$$
 g_u^t(y) = \frac{(1-t)f_u(y) + (1+t)g_u(y)}{2},
$$
where $f_u,g_u : P_{u^\perp}K \to \R$ are convex functions such that
$$
K=\{ y + su: y\in P_{u^\perp}K, \; s\in [f_u(y),-g_u(y)]\}.
$$
Note that $\{K_u(t)\}_{t\in [-1,1]}$ interpolates continuously between $K_u(1) = K$, $K_u(-1) = R_u K$ and $K_u(0) = S_{u^\perp}K$, where $S_{u^\perp}K$ is the \emph{Steiner symmetral} of $K$ in the direction $u$. 

A fundamental property of linear parameter systems is their convexity under mixed volumes: an elegant proof can be found in \cite[Theorem~10.4.1]{Sch2}.
\begin{theorem}\label{t:convexity_Mixed_Volumes)}
    Let $\{K_i(t)\}_{t\in I}$, with $i = 1,...,n$, be linear parameter systems in the direction $u\in \s^{n-1}$. Then $t \mapsto V\bigl(K_1(t),...,\,K_n(t)\bigr)$ is convex.
\end{theorem}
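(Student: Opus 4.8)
The plan is to reduce the statement to the much easier situation in which only one of the bodies varies. The first step is to record a \emph{one-body lemma}: if $\{L(t)\}_{t\in I}$ is a linear parameter system in $\R^m$ in a direction $e\in\s^{m-1}$ and $M_1,\dots,M_{m-1}\subset\R^m$ are \emph{fixed} convex bodies, then $t\mapsto V(L(t),M_1,\dots,M_{m-1})$ is convex. To see this, write $L(t)=\conv\{x_j+\lambda_j t\,e:j\in\mathcal J\}$; then for every fixed $v$ the support function $h_{L(t)}(v)=\sup_j\bigl(\esc{x_j,v}+\lambda_j\esc{e,v}\,t\bigr)$ is a supremum of affine functions of $t$, hence convex in $t$. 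Inserting this into $V(L(t),M_1,\dots,M_{m-1})=\tfrac1m\int_{\s^{m-1}}h_{L(t)}(v)\,\mathrm{d}S(M_1,\dots,M_{m-1};v)$ and using that the mixed surface area measure is nonnegative and does not depend on $t$, convexity in $t$ is preserved under integration. (The same computation applies when the index set is a convex body.)

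The heart of the argument is a lifting to $\R^{n+1}=\R^n\times\R$ that converts the $n$ moving bodies $K_i(t)$ into a single moving segment. Writing $K_i(t)=\conv\{x^{(i)}_j+\lambda^{(i)}_j t\,u:j\in\mathcal J_i\}$, I would set $\widehat K_i:=\conv\{(x^{(i)}_j,\lambda^{(i)}_j):j\in\mathcal J_i\}\subset\R^{n+1}$, a \emph{fixed} convex body, and consider the affine maps $\pi_t:\R^{n+1}\to\R^n$, $\pi_t(y,s)=y+s t u$; since $\pi_t$ commutes with convex hulls, $\pi_t(\widehat K_i)=K_i(t)$. Identifying $\R^n$ with $H:=\R^n\times\{0\}$ and putting $\nu:=e_{n+1}$, one has $\pi_t=P_H\circ\sigma_t$ with $\sigma_t(y,s)=(y+stu,s)$ a unimodular shear of $\R^{n+1}$. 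Applying the Cauchy-type projection identity $V(P_HN_1,\dots,P_HN_n)=(n+1)\,\widetilde V(N_1,\dots,N_n,[0,\nu])$ — valid for arbitrary convex bodies $N_i\subset\R^{n+1}$, where $\widetilde V$ denotes the mixed volume in $\R^{n+1}$; see \cite[Section~5.3]{Sch2} — with $N_i=\sigma_t\widehat K_i$, then transferring $\sigma_t^{-1}$ onto all $n+1$ arguments (possible since $\det\sigma_t=1$) and using $\sigma_t^{-1}\nu=(-tu,1)=:w_t$, I would obtain
\begin{equation*}
V\bigl(K_1(t),\dots,K_n(t)\bigr)=(n+1)\,\widetilde V\bigl(\widehat K_1,\dots,\widehat K_n,[0,w_t]\bigr).
\end{equation*}
Since $[0,w_t]=\conv\bigl\{\,0+0\cdot t(-u,0),\ (0,1)+1\cdot t(-u,0)\,\bigr\}$ is a linear parameter system in $\R^{n+1}$ in the direction $(-u,0)$, while $\widehat K_1,\dots,\widehat K_n$ are fixed, the one-body lemma applied in $\R^{n+1}$ shows that the right-hand side is convex in $t$, and therefore so is $V(K_1(t),\dots,K_n(t))$.

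I expect the main obstacle to be explaining why the lifting is necessary. The Minkowski combination $\lambda_1K_1(t)+\dots+\lambda_nK_n(t)$ is again a linear parameter system in the direction $u$, so $t\mapsto\bigl|\lambda_1K_1(t)+\dots+\lambda_nK_n(t)\bigr|$ is convex for every choice of nonnegative weights (the pure volume case, which one proves by projecting onto $u^\perp$ and noting that each fibre of $K(t)$ in the direction $u$ has length convex in $t$, so that $|K(t)|$ is an integral over a fixed base of convex-in-$t$ functions). But recovering the fully mixed coefficient $V(K_1(t),\dots,K_n(t))$ from this polynomial in the weights by polarization introduces alternating signs, which break convexity of the resulting combination; the lifting is precisely what bypasses this difficulty. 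The one delicate technical ingredient there is the oblique form of the Cauchy identity used above, which follows from the standard orthogonal-projection formula by conjugating with the unimodular shear $\sigma_t$, or may simply be cited.
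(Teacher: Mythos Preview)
The paper does not give its own proof of this theorem: it simply cites \cite[Theorem~10.4.1]{Sch2}. Your argument is correct and is, in fact, precisely the classical Shephard argument that one finds there --- lift each system $K_i(t)$ to a \emph{fixed} compact convex set $\widehat K_i\subset\R^{n+1}$, recover $K_i(t)$ as the image of $\widehat K_i$ under the oblique projection $\pi_t$, and then use the mixed-volume projection formula to rewrite $V(K_1(t),\dots,K_n(t))$ as $(n+1)\,\widetilde V(\widehat K_1,\dots,\widehat K_n,[0,w_t])$, a mixed volume in which only a single segment moves; convexity in $t$ then follows from your one-body lemma via the representation $\widetilde V(L,\,\cdot\,)=\tfrac1{n+1}\int h_L\,\dlat S(\,\cdot\,)$.

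Two small remarks. First, $\widehat K_i$ need not be a convex \emph{body} (it may well have empty interior in $\R^{n+1}$), but this is irrelevant: mixed volumes, mixed area measures, and the projection identity you invoke are all valid for arbitrary compact convex sets. Second, your closing paragraph correctly identifies why the naive route through polarization of $t\mapsto|\lambda_1K_1(t)+\dots+\lambda_nK_n(t)|$ fails: the alternating signs in the polarization formula destroy convexity, and the lifting is exactly the standard device that circumvents this. So there is nothing to correct --- you have reproduced the cited proof.
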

As a corollary one has the following.
\begin{corollary}\label{c:shepard_convexity}
    Let $\{K(t)\}_{t\in I }$ be a linear parameter system. Then for every $i=0,\dots,n-1$ $t\mapsto W_i\bigl(K(t)\bigr)$ is convex.
\end{corollary}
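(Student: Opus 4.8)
The plan is to obtain this as an immediate consequence of Theorem~\ref{t:convexity_Mixed_Volumes)}, by exhibiting the quermassintegral $W_i\bigl(K(t)\bigr)$ as a mixed volume of $n$ linear parameter systems that all share the same direction $u$. First I would recall that, by the definition given above, $W_i(K) = V\bigl(K[n-i],B_2^n[i]\bigr)$ for $i\in\{1,\dots,n-1\}$, while $W_0(K) = V(K[n]) = |K|$; hence for every $i\in\{0,\dots,n-1\}$ one has
\[
W_i\bigl(K(t)\bigr) \;=\; V\bigl(\,\underbrace{K(t),\dots,K(t)}_{n-i},\,\underbrace{B_2^n,\dots,B_2^n}_{i}\,\bigr).
\]

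The key observation is that a constant family of convex bodies is a (degenerate) linear parameter system in \emph{every} direction. Indeed, taking in \eqref{e:linear_parameter_system} the bounded index set $\mathcal{J} = B_2^n$, the points $x_j = j$ for $j\in\mathcal{J}$, and all speed coefficients $\lambda_j = 0$, one gets $\conv\{x_j + \lambda_j t u : j\in\mathcal{J}\} = \conv\bigl(B_2^n\bigr) = B_2^n$ for every $t$. Thus $\{B_2^n\}_{t\in I}$ is a linear parameter system in the direction $u$, the same direction in which the given family $\{K(t)\}_{t\in I}$ is represented.

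Now I would set $K_1(t) = \dots = K_{n-i}(t) = K(t)$ and $K_{n-i+1}(t) = \dots = K_n(t) = B_2^n$. These are $n$ linear parameter systems in the common direction $u$, so Theorem~\ref{t:convexity_Mixed_Volumes)} applies and gives that
\[
t \;\longmapsto\; V\bigl(K_1(t),\dots,K_n(t)\bigr) \;=\; W_i\bigl(K(t)\bigr)
\]
is convex on $I$. Since $i\in\{0,\dots,n-1\}$ was arbitrary, the corollary follows. There is essentially no obstacle: the only point to verify is that the constant family $\{B_2^n\}$ is admissible as a linear parameter system, which is immediate from the definition once vanishing speeds $\lambda_j$ are allowed. (Alternatively, one could expand $\bigl|K(t)+\rho B_2^n\bigr|$ in powers of $\rho\ge 0$ through the Steiner formula and invoke convexity of $t\mapsto\bigl|K(t)+\rho B_2^n\bigr|$ for each fixed $\rho$, noting that $\{K(t)+\rho B_2^n\}_{t\in I}$ is again a linear parameter system in the direction $u$; but the direct mixed-volume route above is shorter.)
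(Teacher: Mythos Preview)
Your proof is correct and is precisely the intended derivation: the paper gives no explicit argument for this corollary, stating it simply as an immediate consequence of Theorem~\ref{t:convexity_Mixed_Volumes)}. Your observation that the constant family $\{B_2^n\}_{t\in I}$ is a (degenerate) linear parameter system in the direction $u$ is exactly what makes the corollary immediate.
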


\subsection{Functional setting}
We recall that a function $\varphi:\R^n\longrightarrow\R_{\geq0}$ is
$p$-concave, for $p\in\R\cup\{\pm\infty\}$, if
\begin{equation*}\label{e:p-concavecondition}
\varphi\bigl((1-\lambda)x+\lambda y\bigr)\geq
\bigl((1-\lambda)\varphi(x)^p+\lambda \varphi(y)^p\bigr)^{1/p}
\end{equation*}
for all $x,y\in\R^n$ such that $\varphi(x)\varphi(y)>0$ and any $\lambda\in(0,1)$, where the cases $p=0$, $p=\infty$ and $p=-\infty$ must be understood as the corresponding expressions that are obtained by continuity, namely, the geometric mean, the maximum and the minimum (of $\varphi(x)$ and $\varphi(y)$), respectively. A $0$-concave function is usually called \emph{log-concave} whereas a $(-\infty)$-concave function is referred to as \emph{quasi-concave}.
Moreover, Jensen's inequality implies that a $q$-concave function is also $p$-concave, whenever $q>p$.

In the following we will assume that $p$-concave functions are upper semicontinuous. Indeed, otherwise we may replace the function by its upper closure, which is determined via the closure of the superlevel sets (see \cite[Theorem~1.6]{RW97}), and thus their Lebesgue measure is preserved. In particular, this will imply that the maximum of such a function is attained. Additionally, we will always consider integrable $p$-concave functions. The latter, together with upper semicontinuity, imply that in our framework the superlevel sets of a $p$-concave function are convex bodies.

Let $s\in[-\infty,1]$ and $\nu$ be a Borel measure in $\R^n$. Then $\nu$ is $s$-concave if
$$
\nu\bigl((1-\lambda) A + \lambda B\bigr)\geq \bigl((1-\lambda)\nu(A)^s + \lambda \nu(B)^s\bigr)^{1/s}
$$
When $s = -\infty$, the measure is usually referred to as \emph{convex}. Following Borell's characterization \cite{Borell}, an absolutely continuous measure $\nu$ in $\R^n$ with density $\varphi$ is $s$-concave if and only if $\varphi$ is $p$-concave with $p=s/(1-ns)$ (Note that Jensen's inequality implies that convex measures are the largest class among $s$-concave ones). The latter can be deduced from the following result, originally proved in
\cite{Borell} and \cite{BL} (see also \cite{G} for a detailed presentation).
\begin{theorem}[The Borell-Brascamp-Lieb inequality]\label{t:BBL}
Let $\lambda\in(0,1)$. Let $-1/n\leq p\leq\infty$ and let $f,g,h:\R^n\longrightarrow\R_{\geq0}$ be measurable functions, with positive integrals, such that
\begin{equation*}
h\bigl((1-\lambda)x + \lambda y\bigr)\geq \bigl((1-\lambda)f(x)^p + \lambda g(y)^p\bigr)^{1/p}
\end{equation*}
for all $x,y\in\R^n$ such that $f(x)g(y)>0$. Then
\begin{equation}\label{e:BBL}
\int_{\R^n}h(x)\,\dlat x\geq \left((1-\lambda)\left(\int_{\R^n}f(x)\,\dlat x\right)^q+\lambda\left(\int_{\R^n}g(x)\,\dlat x\right)^{q}\right)^{1/q},
\end{equation}
where $q=p/(np+1)$.
\end{theorem}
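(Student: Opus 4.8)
The plan is to prove \eqref{e:BBL} by induction on the dimension $n$, the engine being the one-dimensional case and the inductive step a Fubini-type slicing argument; this is the classical route of Borell and of Brascamp and Lieb. First I would dispose of the degenerate exponents: the cases $p=0$ (Prékopa--Leindler), $p=\infty$ (containing Brunn--Minkowski) and $p=-1/n$ (where $q$ is read as $-\infty$, so \eqref{e:BBL} becomes $\int h\geq\min\{\int f,\int g\}$) all follow from the range $p\in(-1/n,\infty)\setminus\{0\}$ by letting $p$ tend to the relevant value and using monotone/dominated convergence on the right-hand side, so I may take $p$ finite and nonzero. A routine approximation --- truncating superlevel sets and shrinking $f,g$ to functions continuous and strictly positive on bounded open sets --- then lets me assume $f,g$ are as regular as needed, since the hypothesis imposes nothing where $f(x)g(y)=0$. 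Throughout, the standing assumption $p\geq -1/n$ is exactly what keeps the intermediate exponents admissible.

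\emph{Base case $n=1$.} Let $F=\int_\R f>0$, $G=\int_\R g>0$ and introduce the increasing quantile maps $u,v:(0,1)\to\R$ determined by $\int_{-\infty}^{u(t)}f=tF$ and $\int_{-\infty}^{v(t)}g=tG$. With $w(t)=(1-\lambda)u(t)+\lambda v(t)$ one has $f(u(t))u'(t)=F$ and $g(v(t))v'(t)=G$ a.e., and $w$ is increasing with $w'=(1-\lambda)u'+\lambda v'$; since $(u(t),v(t))$ always lies where $f\cdot g>0$, applying the hypothesis along this curve and pushing $\int_\R h$ forward through $w$ reduces the claim to the pointwise estimate
\[
\Big((1-\lambda)\big(\tfrac{F}{a}\big)^{p}+\lambda\big(\tfrac{G}{b}\big)^{p}\Big)^{1/p}\big((1-\lambda)a+\lambda b\big)\;\geq\;\big((1-\lambda)F^{q}+\lambda G^{q}\big)^{1/q},\qquad a,b>0,
\]
with $q=p/(p+1)$, which is the generalised Hölder inequality ($1/q=1/p+1$) for $p>0$, its reverse-Hölder analogue for $-1\leq p<0$, and the limiting case for $p=0$. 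This covers precisely $q\geq -1$, the $n=1$ instance of $p\geq -1/n$.

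\emph{Inductive step.} Writing points of $\R^n$ as $(x',s)\in\R^{n-1}\times\R$ and slicing, for heights $s_1,s_2$ with $s=(1-\lambda)s_1+\lambda s_2$ the hypothesis descends to the slices $f(\cdot,s_1),g(\cdot,s_2),h(\cdot,s)$ on $\R^{n-1}$ with the same exponent $p$; the inductive hypothesis in dimension $n-1$ then shows that the marginals $F(s)=\int_{\R^{n-1}}f(\cdot,s)$, $G$ and $H$ satisfy a one-dimensional Borell--Brascamp--Lieb-type hypothesis in the variable $s$ with exponent $q_{n-1}=p/((n-1)p+1)$, with the convention that a vanishing marginal deletes its term. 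Since $q_{n-1}\geq -1$ is again equivalent to $p\geq -1/n$, the base case applies to $F,G,H$ and, via Fubini, yields \eqref{e:BBL} with exponent $q_{n-1}/(q_{n-1}+1)=p/(np+1)=q$.

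\emph{Main obstacle.} The conceptual content is entirely the one-dimensional reduction to the power-mean inequality above, which is elementary once the cases $p>0$, $p\in(-1,0)$ and $p=0$ are separated. The real work is technical: making the change of variables in the base case rigorous (cleanest after the approximation step, where $u$ and $v$ become $C^1$ diffeomorphisms), and --- what I expect to be the main obstacle for a fully rigorous write-up --- verifying measurability of the marginals $F,G,H$ and the validity of Fubini in the inductive step while correctly handling the measurable sets of heights on which a marginal vanishes, together with keeping track of the admissible range of the intermediate exponents. None of this requires a new idea.
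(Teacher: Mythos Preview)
The paper does not prove Theorem~\ref{t:BBL} at all: it is stated in the preliminaries as a known result, with attribution to Borell \cite{Borell} and Brascamp--Lieb \cite{BL} (and a pointer to \cite{G} for a detailed presentation). There is thus no ``paper's own proof'' to compare your proposal against.

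That said, your outline is the classical Borell/Brascamp--Lieb argument --- induction on dimension, the one-dimensional base case via the transport/quantile change of variables reducing to a power-mean inequality, and the inductive step by Fubini slicing with the exponent bookkeeping $q_{n-1}/(q_{n-1}+1)=q$ --- and it is correct. Your identification of the technical points (regularity for the change of variables, measurability and null sets for the marginals) is accurate, and these are handled in the references the paper cites.
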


Another consequence of the Borell-Brascamp-Lieb inequality is the following. 
\begin{corollary}\label{c:marginals}
    Let $f:\R^n\times\R^d$ be a $p$-concave function, with $p\geq -1/n$. Then
    $$
    F(y) = \int_{\R^n}f(x,y)\,\dlat x
    $$
    is $p/(np+1)$-concave.
\end{corollary}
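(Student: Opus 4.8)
The plan is to apply the Borell--Brascamp--Lieb inequality (Theorem~\ref{t:BBL}) slice by slice in the $\R^d$ variable. Fix $y_0,y_1\in\R^d$ with $F(y_0)F(y_1)>0$ and $\lambda\in(0,1)$, and set $y_\lambda=(1-\lambda)y_0+\lambda y_1$. Introduce the three functions on $\R^n$ obtained by freezing the second variable,
$$
f_0(x)=f(x,y_0),\qquad f_1(x)=f(x,y_1),\qquad h(x)=f(x,y_\lambda).
$$
First I would observe that these are measurable with positive integrals: since the superlevel sets of $f$ are convex bodies, their slices in the $\R^n$ direction are convex, hence measurable; and the positivity of $\int_{\R^n}f_0$ and $\int_{\R^n}f_1$ is precisely the hypothesis $F(y_0)F(y_1)>0$.

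Next I would check the pointwise hypothesis of Theorem~\ref{t:BBL} for the triple $(f_0,f_1,h)$. Given $x_0,x_1\in\R^n$ with $f_0(x_0)f_1(x_1)>0$, we have $(1-\lambda)(x_0,y_0)+\lambda(x_1,y_1)=\bigl((1-\lambda)x_0+\lambda x_1,\,y_\lambda\bigr)$, so the $p$-concavity of $f$ on $\R^n\times\R^d$ applied to $(x_0,y_0)$ and $(x_1,y_1)$ gives
$$
h\bigl((1-\lambda)x_0+\lambda x_1\bigr)
= f\bigl((1-\lambda)x_0+\lambda x_1,\;y_\lambda\bigr)
\geq \bigl((1-\lambda)f_0(x_0)^p+\lambda f_1(x_1)^p\bigr)^{1/p}.
$$
Since $p\geq -1/n$, Theorem~\ref{t:BBL} applies, and its conclusion \eqref{e:BBL} yields
$$
F(y_\lambda)=\int_{\R^n}h(x)\,\dlat x
\geq\left((1-\lambda)\Bigl(\int_{\R^n}f_0\Bigr)^{q}+\lambda\Bigl(\int_{\R^n}f_1\Bigr)^{q}\right)^{1/q}
=\bigl((1-\lambda)F(y_0)^q+\lambda F(y_1)^q\bigr)^{1/q},
$$
with $q=p/(np+1)$. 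As $y_0,y_1$ and $\lambda$ were arbitrary subject to $F(y_0)F(y_1)>0$, this is exactly the asserted $q$-concavity of $F$.

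I do not expect a real obstacle: the statement is a routine localization of the Borell--Brascamp--Lieb inequality. The only points deserving a word of care are the measurability and finiteness of the slices $f_0,f_1$ — covered by the standing assumption that $f$ is an upper semicontinuous integrable $p$-concave function, so that its superlevel sets (hence their slices) are convex bodies and, by Fubini, $F$ is finite — together with the restriction to pairs $y_0,y_1$ for which both slices have positive integral, which is the only regime in which the $q$-concavity inequality carries content, since for $y$ outside $\conv\bigl(P_{\R^d}\supp f\bigr)$ one has $F(y)=0$ and both sides vanish. One could alternatively reduce, after an affine change of variables, to the case $d=1$ with $y_0=0$, but the direct slicing argument above is the cleanest route.
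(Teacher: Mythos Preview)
Your proof is correct and is exactly the intended derivation: the paper states this result as an immediate corollary of the Borell--Brascamp--Lieb inequality (Theorem~\ref{t:BBL}) without providing a proof, and the slice-by-slice application you carry out is the standard way to extract it. The only cosmetic point is that you need not invoke convex-body superlevel sets for measurability of the slices---upper semicontinuity of $f$ already makes each $f(\cdot,y)$ Borel measurable---and the positivity of $\int h$ (required in the hypotheses of Theorem~\ref{t:BBL} as stated) follows automatically once some $x_0,x_1$ with $f_0(x_0)f_1(x_1)>0$ exist, since then $h\bigl((1-\lambda)x_0+\lambda x_1\bigr)>0$ and $h$ is upper semicontinuous.
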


We will also recall the notion of symmetric decreasing rearrangement.  We essentially follow \cite{Burchard} (see also \cite[Chapter~3]{LiebAnalysis}). Let $A\subset\R^n$ be a measurable set with finite volume. Its symmetric rearrangement $A^*$ is a Euclidean open ball with the same volume as $A$. Let now $f:\R^n \to \R_+$ be an integrable function. Using its \emph{layer-cake representation} 
$$
f(x) = \int_0^{+\infty} \chi_{_{\{f>t\}}}(x)\,\dlat t,
$$
the \emph{symmetric decreasing rearrangement} of $f$, denoted as $f^*$, is given by
$$
f^*(x) = \int_0^{+\infty} \chi_{_{\{f>t\}^*}}(x)\,\dlat t.
$$
Note that, for any given $t\in\R_+$, $\{f\geq t\}$ is used to denote the superlevel set $\{x\in \R^n : f(x) \geq t\}$. We also emphasize that $f^*$ is radially symmetric and decreasing. In addition, it preserves the volume of the superlevel sets, i.e., $|\{f > t\}| = |\{f^*>t\}|$ for all $t\in\R_+$. Moreover, one has $\|f^*\|_p = \|f\|_p$ for each $1\leq p\leq +\infty$, where $\|\cdot\|_p$ is the usual $L_p(\R^n)$-norm. It is also worth mentioning that the symmetric decreasing rearrangement preserves log-concavity (see \cite[Proposition~24]{MilRot13}).

For any given $u\in \s^{n-1}$ the Steiner symmetral of $f$ with respect to $u^\perp$, denoted as $f^u$ is the function given by 
$$
f^u(x) = \int_0^{+\infty} \chi_{_{\bigl\{S_{u^\perp}\{f>t\}\bigr\}}}(x)\,\dlat t.
$$
Equivalently, $f^u$ is obtained rearranging $f$ along every line parallel to $u$, i.e., for every $y\in u^\perp$, taking $h(t) = f(y + tu)$, we have that $f^u(y+ tu) = h^*(t)$. It is proved in \cite{BLL74} (see also \cite[Chapter~14]{BSConv11}) that for every measurable function $f:\R^n \to \R_+$ with compact support there exists a sequence of the form $f_0 = f$ and $f_{n+1} = f_{n}^u$ for some $u \in \s^{n-1}$, which converges in the $L_1(\R^n)$-norm to $f^*$.

A key result involving symmetric decreasing rearrangements, which will be central to this work, is Christ's version \cite{Christ84} of the Rogers-Brascamp-Lieb-Luttinger inequality. As shown in \cite{PP12}, this theorem is a powerful tool for proving stochastic isoperimetric inequalities. We state it here for the reader's convenience.
\begin{theorem}[\cite{Christ84}]\label{t:Christ_BL}
  Let $f_1,...,f_N : \R^n \to  \R_+$ be integrable functions and let $F : ( \R^n)^N \to \R_+$. Suppose that $F$ satisfies that, for any $u \in \s^{n-1}$ and $ y =(y_1,...,y_N) \in (u^\perp)^N$, the function $F_{u,y}:\R^N \to \R_+$ defined by $F_{u,y}(t_1,...,t_N) = F(y_1 + t_1 u, ...,y_N+t_N u)$ is even and quasi-concave. Then 
    \begin{equation}\label{e:Christ_BLL}
         \int_{(\R^n)^N}F(x_1,...,x_N)\prod_{i=1}^N f_i(x_i) \, \dlat  x \leq \int_{(\R^n)^N}F(x_1,...,x_N)\prod_{i=1}^N f^*_i(x_i) \, \dlat  x,
    \end{equation}
     where $\dlat  x$ stands for $\dlat x_1\cdots\dlat x_N$.
\end{theorem}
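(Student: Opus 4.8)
The plan is to derive \eqref{e:Christ_BLL} from a one-dimensional rearrangement inequality by iterated Steiner symmetrization of the $f_i$: the hypothesis on $F$ is precisely what forces each symmetrization step not to decrease the integral, while $F$ itself stays fixed throughout. The one-dimensional inequality will in turn be obtained, via the layer-cake representation, from the Brascamp--Lieb--Luttinger inequality \cite{BLL74}.

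I would first make two reductions. Replacing $F$ by $\min\{F,L\}$ — whose line restrictions $\min\{F_{u,y},L\}$ are again even and quasi-concave, truncation from above preserving both — and letting $L\to\infty$ at the end by monotone convergence, I may assume $F$ is bounded. Likewise, replacing each $f_i$ by $\min\{f_i,M\}\chi_{_{RB_2^n}}$ and letting $M,R\to\infty$ — using that these truncations increase to $f_i$, that the symmetric decreasing rearrangement is monotone and equimeasurable (so the truncated rearrangements increase to $f_i^*$), together with monotone convergence on both sides — I may assume each $f_i$ is bounded with support in a common centred ball.

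The core is a one-variable rearrangement statement: \emph{if $G:\R^N\to\R_+$ is even and quasi-concave and $g_1,\dots,g_N:\R\to\R_+$ are integrable, then}
\begin{equation*}
\int_{\R^N}G(t)\prod_{i=1}^{N}g_i(t_i)\,\dlat t\ \leq\ \int_{\R^N}G(t)\prod_{i=1}^{N}g_i^*(t_i)\,\dlat t .
\end{equation*}
To prove it, expand $G$ and each $g_i$ by the layer-cake representation; Fubini rewrites both sides as integrals over $s,r_1,\dots,r_N>0$ of $\bigl|C\cap(A_1\times\cdots\times A_N)\bigr|$, respectively $\bigl|C\cap(A_1^*\times\cdots\times A_N^*)\bigr|$, where $C=\{G>s\}$ is an origin-symmetric convex subset of $\R^N$, $A_i=\{g_i>r_i\}$ has finite measure (since $g_i\in L_1$), and $A_i^*$ is the centred open interval of the same measure as $A_i$. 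So it is enough to check $\bigl|C\cap(A_1\times\cdots\times A_N)\bigr|\leq\bigl|C\cap(A_1^*\times\cdots\times A_N^*)\bigr|$ for every origin-symmetric convex $C\subset\R^N$. Writing $C$ (up to a null set) as a countable intersection of symmetric slabs $C=\bigcap_{j\geq1}\{t:\enorm{\esc{a_j,t}}\leq1\}$ and truncating to $C_m=\bigcap_{j\leq m}\{t:\enorm{\esc{a_j,t}}\leq1\}$, the product $\chi_{_{C_m}}(t)\prod_i\chi_{_{A_i}}(t_i)$ is a finite product of functions of linear forms on $\R^N$ — the slab factors $\chi_{_{[-1,1]}}(\esc{a_j,t})$ and the coordinate factors $\chi_{_{A_i}}(\esc{e_i,t})$ — so the Brascamp--Lieb--Luttinger inequality applies and, since $\chi_{_{[-1,1]}}$ is already symmetric decreasing and $\chi_{_{A_i}}$ rearranges to $\chi_{_{A_i^*}}$, it gives the inequality with $C_m$ in place of $C$; letting $m\to\infty$ (Lebesgue measure is continuous along the decreasing sequence $C_m\cap(A_1\times\cdots\times A_N)$, which has finite measure) yields the claim.

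It remains to run the symmetrization. Fix $u\in\s^{n-1}$ and split $\R^n$ into the line $\R u$ and the hyperplane $u^\perp$, writing $x_i=y_i+t_iu$ with $y_i\in u^\perp$; the hypothesis makes $t\mapsto F_{u,y}(t)$ even and quasi-concave on each slice $y\in(u^\perp)^N$, while $t\mapsto f_i(y_i+tu)$ has one-dimensional symmetric decreasing rearrangement $t\mapsto f_i^u(y_i+tu)$ by the definition of the Steiner symmetral. Applying the one-variable inequality slice by slice and integrating over $(u^\perp)^N$ gives $\int F\prod_i f_i\leq\int F\prod_i f_i^u$. Since $F$ is unchanged, this can be iterated along a sequence of directions which — as in the proof of \cite{BLL74}, applied to our finitely many (now bounded and supported in a common ball) functions at once — rearranges all of them simultaneously, i.e.\ along which the iterated Steiner symmetrals of each $f_i$ converge in $L_1$ to $f_i^*$; passing to the limit by dominated convergence (the integrands are bounded, uniformly in the iteration, by a fixed integrable function provided by the reduction step) and then undoing the truncations proves \eqref{e:Christ_BLL}. \textbf{The main obstacle} is the one-variable inequality: a coordinate-by-coordinate argument is false here, and one genuinely needs the origin-symmetry and convexity of the superlevel sets of $G$ — exploited above through the slab decomposition together with Brascamp--Lieb--Luttinger. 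A secondary technical point is securing a single sequence of Steiner directions that handles all $N$ functions at once.
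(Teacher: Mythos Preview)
Your proposal is correct and follows the same route the paper sketches: the paper does not prove Theorem~\ref{t:Christ_BL} in full but only records the Steiner-symmetrization step \eqref{e:BBL_Steiner} and says that iterating it along a suitable sequence of directions yields \eqref{e:Christ_BLL}, referring to \cite[Proposition~3.2]{PP12} for details. You carry out exactly this scheme, additionally supplying a clean proof of the one-dimensional core inequality (via the layer-cake representation, slab approximation of the symmetric convex superlevel sets of $G$, and the Brascamp--Lieb--Luttinger inequality), which the paper leaves implicit; the two technical caveats you flag --- the need for a single sequence of directions working for all $N$ functions, and the truncation to bounded compactly supported data --- are genuine but standard, and are precisely the points the paper outsources to \cite{BLL74} and \cite{PP12}.
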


Indeed, under the assumptions of Theorem \ref{t:Christ_BL} one can check that
\begin{align}\label{e:BBL_Steiner}
    \int_{(\R^n)^N}F(x_1,...,x_N)\prod_{i=1}^N f_i(x_i) \, \dlat  x \leq \int_{(\R^n)^N}F(x_1,...,x_N)\prod_{i=1}^N f_i^u(x_i) \, \dlat x,
\end{align}
 which implies \eqref{e:Christ_BLL} after a sequence of Steiner symmetrizations with respect to suitable directions. We refer the reader to \cite[Proposition~3.2]{PP12} for a detailed exposition of the latter. Note that, if $F_{u, y}$ is quasi-convex for any $u \in \s^{n-1}$ and $ y =(y_1,...,y_N) \in (u^\perp)^N$, the inequalities \eqref{e:Christ_BLL} and \eqref{e:BBL_Steiner} are reversed. 

\subsection{Functional mixed volumes}

The notion of  mixed volumes was extended to a functional setting by V. Milman and L. Rotem \cite{MilRot13} and S. Bobkov, A. Colesanti and I. Fragalà \cite{BCF14} independently. Although we essentially work in the class of $p$-concave functions, with $p\in \R\cup\{+\infty\}$, we will introduce this concept for the larger class of quasi-concave functions. Let $f_1, \dots, f_n :\R^n \to \R_+$ be  quasi-concave functions. Their mixed integral or functional mixed volume is defined as
\begin{equation*}
    V(f_1,\dots,f_n)=\int_0^{+\infty} V\bigr(\{f_1 \geq t\},\dots,\{f_n \geq t\}\bigl)\,\dlat t.
\end{equation*}
Following the spirit of the geometrical case, for any given quasi-concave function $f:\R^n \to \R_+$, its $i$-th quermassintegral is 
    \begin{equation*}
        W_i(f) = V(f[n-i],\chi_{_{B_2^n}}[i]) = \int_0^{+\infty} W_i\bigl(\{ f \geq t\}\bigr)\,\dlat t,
    \end{equation*}
    where $i\in\{0,1,...,n-1\}$.

Furthermore, for any $p$-concave function $f:\R^n\to \R_+$ its polar projection body is the centrally symmetric convex set, denoted as $\pp f$, whose Minkowski functional is given by
$$
\|u\|_{\pp f} = n V(f[n-1],\chi_{_{[0,u]}}).
$$
Note that 
$$
\|u\|_{\pp f} = n\int_0^{+\infty}V(\{f\geq t\}[n-1],[0,u])\,\dlat t = \int_0^{+\infty}|P_{u^\perp}\{f\geq t\}|_{n-1}\,\dlat t.
$$
Hence, when $f$ belongs to the Sobolev space $W^{1,1}(\R^n)$ of integrable functions with integrable first-order weak derivatives, one obtains using the co-area formula that
$$
\|u\|_{\pp f} = \frac{1}{2}\int_{\R^n}|\esc{\nabla f(x),u}|\,\dlat x.
$$
From the latter, it is not difficult to check that the inequality
$$
|\pp f| \leq |\pp f^*|,
$$
is equivalent to Zhang's affine Sobolev inequality restricted to the class of $p$-concave functions in $W^{1,1}(\R^n)$.

\section{Main results}\label{s:main}
Given an integrable $p$-concave function $f:\R^n \to \R_+$, with $p \in \R\cup\{+\infty\}$, and let $\{(X_k,Z_k)\}_{k=1}^N$ be independent random vectors uniformly distributed w.r.t. $f$. As pointed out in \cite{PRB2}, it is possible to consider more general forms of the stochastic model introduced in \eqref{e:stoch_model} by using the $M$-sum. Specifically, let $C\subset \R^N$ be a compact convex set contained in the positive orthant. We can define the random convex sets $E_{N,p},H_{N,p}\subset\R^n\times\R_+$ as
\begin{align}\label{e:epi_p<=0}
    E_{N,p} =  \begin{cases} 
      \oplus_C\bigl(R_{Z_1^p}(X_1),\dots,R_{Z_N^p}(X_N)\bigr) & \mathrm{if}\quad p< 0 \\
      \oplus_C\bigl(R_{-\log Z_1}(X_1),\dots,R_{-\log Z_N}(X_N)\bigr) & \mathrm{if}\quad p=0 
   \end{cases}
\end{align}
and 
\begin{align}\label{e:hypo_p>0}
    H_{N,p} = \oplus_C\bigl(\widetilde{R}_{Z_1^p}(X_1),\dots,\widetilde{R}_{Z_N^p}(X_N)\bigr)
\end{align}
when $p>0$. 
Note that the sets defined in \eqref{e:stoch_model_convexhull_p<=0} and \eqref{e:stoch_model_convexhullp>0} (and hence the stochastic model built upon them) can be recovered by setting $C = \conv\{e_1,\dots,e_N\}$.

The proofs of both Theorem \ref{t:stoch_iso_quermass} and \ref{t:stoc_affine_sobolev} essentially rely on checking that the corresponding functionals satisfy the conditions of Theorem \ref{t:Christ_BL}. In this regard, for the reader's convenience, we split the first part of Theorem \ref{t:stoch_iso_quermass} proof's in two lemmas. We start by adapting \cite[Proposition~5.1]{PRB2} to the context of level sets.
\begin{lemma}\label{l:level_sets_lps}
    Let $N\geq n+1$, $\{(x_i,z_i)\}_{i=1}^N\subset (\R^n\times\R_+)^N$ and $\{\lambda_i\}_{i=1}^N\subset \R$. Let $C\subset \R^N$ be a compact convex set contained in the positive orthant,
    $$
    \rho_i(p) = \begin{cases} 
      z_i^p& \mathrm{if}\quad p\neq 0, \\
     -\log z_i & \mathrm{if}\quad p=0
   \end{cases}
    $$
    and 
    \begin{align*}
         &E_{N,p}(t) = \oplus_C\bigl(\{R_{\rho_i(p)}(x_i + \lambda_i t \theta)\}_{i=1}^N\bigr) \quad (p\leq 0),\\
         &H_{N,p}(t)= \oplus_C\bigl(\{\widetilde{R}_{\rho_i(p)}(x_i + \lambda_i t \theta)\}_{i=1}^N\bigr) \quad (p > 0).
    \end{align*}
    Then, for any $z\in \R_+$, both $\{P_{e_{n+1}^\perp}(E_{p,N}(t)  \cap \pi_z)\}_{t\in I}$ and $\{P_{e_{n+1}^\perp}(H_{p,N}(t)\cap \pi_z)\}_{t\in I}$, where $\pi_z = e_{n+1}^\perp + z e_{n+1}$ and $I\subset \R$ an interval, are linear parameter systems. 
\end{lemma}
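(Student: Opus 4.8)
The plan is to unwind the definitions and show that each of the two families of orthogonal projections can be written in the canonical form \eqref{e:linear_parameter_system}, with index set taken to be the convex body $C\times(\text{product of the relevant factor sets})$. Fix $z\in\R_+$ and consider first the case $p\leq 0$. A point of $E_{N,p}(t)$ has the form $\sum_{i=1}^N c_i\,(x_i+\lambda_i t\theta,\,r_i)$ with $(c_1,\dots,c_N)\in C$ and $r_i\ge \rho_i(p)$ for each $i$. Intersecting with the horizontal hyperplane $\pi_z$ imposes the single affine constraint $\sum_i c_i r_i = z$ on the last coordinate, and projecting onto $e_{n+1}^\perp$ simply discards that last coordinate. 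Hence
$$
P_{e_{n+1}^\perp}\bigl(E_{N,p}(t)\cap\pi_z\bigr)
=\Bigl\{\sum_{i=1}^N c_i(x_i+\lambda_i t\theta)\;:\;(c_1,\dots,c_N)\in C,\ r_i\ge\rho_i(p),\ \textstyle\sum_i c_ir_i=z\Bigr\}.
$$
The key observation is that the parameter set
$$
\mathcal{A}_z=\Bigl\{(c_1,\dots,c_N)\in C\ :\ \exists\,r_i\ge\rho_i(p)\text{ with }\textstyle\sum_i c_ir_i=z\Bigr\}
$$
does \emph{not} depend on $t$: it is the projection onto the $c$-coordinates of the convex set $\{(c,r):c\in C,\ r_i\ge\rho_i(p),\ \sum_ic_ir_i=z\}$, hence a bounded (since $C$ is a body) set in $\R^N$. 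Therefore
$$
P_{e_{n+1}^\perp}\bigl(E_{N,p}(t)\cap\pi_z\bigr)=\conv\Bigl\{\textstyle\sum_i c_i x_i + t\bigl(\sum_i c_i\lambda_i\bigr)\theta\ :\ (c_1,\dots,c_N)\in\mathcal{A}_z\Bigr\},
$$
which is exactly of the form $\conv\{X_{(c)}+\Lambda_{(c)}\,t\,\theta\,:\,c\in\mathcal{A}_z\}$ with bounded families $X_{(c)}=\sum_i c_ix_i$ and $\Lambda_{(c)}=\sum_ic_i\lambda_i$ indexed by $j=c\in\mathcal{A}_z$; this is the defining form \eqref{e:linear_parameter_system} of a linear parameter system in the direction $\theta$. (One should also note the family is a genuine family of convex bodies: convexity of each $E_{N,p}(t)$ comes from \cite[Theorem~6.1]{GHW13} since $C$ lies in the positive orthant, so the projected section is convex; and the values $r_i\ge\rho_i(p)$ being taken in a half-line is precisely what makes $\mathcal{A}_z$ nonempty for large $z$, with the convention that the system may be defined on the subinterval of $I$ where the section is nonempty.)

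For $p>0$ the argument is identical \emph{mutatis mutandis}: now $r_i\le\rho_i(p)$ replaces $r_i\ge\rho_i(p)$ in the description of $\widetilde R$, and the same computation gives $P_{e_{n+1}^\perp}(H_{N,p}(t)\cap\pi_z)=\conv\{\sum_ic_ix_i+t(\sum_ic_i\lambda_i)\theta:c\in\mathcal{A}_z'\}$ with $\mathcal{A}_z'=\{c\in C:\exists\,r_i\le\rho_i(p),\ \sum_ic_ir_i=z\}$, again independent of $t$ and bounded. Thus both families are linear parameter systems in the direction $\theta$.

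The main obstacle, such as it is, is bookkeeping rather than depth: one must be careful that the section $E_{N,p}(t)\cap\pi_z$ (resp.\ $H_{N,p}(t)\cap\pi_z$) is taken inside $\R^n\times\R_+$, so the constraint $r_i\ge\rho_i(p)$ (resp.\ $\le$) interacts with the sign of $z$ and the $c_i\ge 0$; and one should make the (routine) verification that the resulting index set $\mathcal{A}_z$ is bounded and that the convex-hull representation is legitimate — i.e.\ that taking $\conv$ over $\mathcal{A}_z$ reproduces the section exactly, which follows because $E_{N,p}(t)$ is already convex and intersecting a convex set with a hyperplane and then projecting preserves convexity. Once the $t$-independence of the index set is isolated, the statement is immediate from the definition \eqref{e:linear_parameter_system}. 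This is essentially the level-set analogue of \cite[Proposition~5.1]{PRB2}, and it is exactly the input needed to invoke Theorem \ref{t:convexity_Mixed_Volumes)} and Corollary \ref{c:shepard_convexity} fiberwise in $z$ in the proof of Theorem \ref{t:stoch_iso_quermass}.
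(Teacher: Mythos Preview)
Your proposal is correct and follows essentially the same approach as the paper's proof: both unwind the $M$-sum, observe that the last coordinate of a point in $E_{N,p}(t)$ ranges over $[\sum_i c_i\rho_i(p),+\infty)$ once $c\in C$ is fixed (using $c_i\ge 0$), and then isolate a $t$-independent convex index set in $C$ over which the projected section is parametrized in the form \eqref{e:linear_parameter_system}. Your set $\mathcal{A}_z$ coincides with the paper's $\widetilde C$ (the paper writes it as $g^{-1}(\{z\})$, but what is actually used is the sublevel set $\{c\in C:\sum_i c_i\rho_i(p)\le z\}$, which is exactly your $\mathcal{A}_z$); if anything, your bookkeeping of the individual $r_i$'s is slightly more explicit than the paper's sketch.
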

\begin{proof}
    We provide a sketch of the proof of the case $p\leq 0$ as the one of $p>0$ is analogous. First, note that
    \begin{align*}
        E_{N,p}(t)  &= \left\{\sum_{i=1}^N c_i (x_i + r_ie_{n+1}) + \left(\sum_{i=1}^Nc_i\lambda_i\right)t\theta: c \in C, \rho_i(p) \leq r_i\right\}\\
        &=\left\{(x,r): x=\sum_{i=1}^N c_i(x_i + \lambda_i t\theta), r \geq \sum_{i=1}^N c_i\rho_i(p), c\in C\right\}
    \end{align*}
    Let now $(r_1,\dots,r_N)\in\R^N$ and $g:C\subset\R^N \to \R$ be the function given by $g(c) = \esc{(r_1,\dots r_N),c}$. Fixed $z\in\R$, the set $\widetilde{C} = g^{-1}(\{z\})$ is a compact convex set in $\R^N$. Moreover, 
    $$
    E_{p,N}(t)  \cap \pi_z =  \{x_{\tilde{c}} + z e_{n+1} + \mu_{\tilde{c}} t\theta: \tilde{c}\in \widetilde{C}\} 
    $$
    for all $ z\geq \min_{c\in C}\sum_{i=1}^N c_i\rho_i(p)$.
  
    Thus, for any $t\in I$ and $ z\geq \min_{c\in C}\sum_{i=1}^N c_i\rho_i(p)$, $P_{e_{n+1}^\perp}(E_{p,N}(t) \cap \pi_z) = \{x_{\tilde{c}} + \mu_{\tilde{c}} t\theta: \tilde{c}\in \widetilde{C}\}$ is a linear parameter system indexed in $\widetilde{C}$. 
\end{proof}

The latter together with Corollary \ref{c:shepard_convexity} yields the following. \begin{lemma}\label{l:epi_is_lps}
     Let $N\geq n+1$ and $\{(x_i,z_i)\}_{i=1}^N\subset(\R^n\times\R_+)^N$. Then, for fixed $z_1,\dots z_N \in\R_+$, the function $F_{u,y};\R^N \to \R_+$ given by
     $$
     F_{u,y} (s_1,\dots,s_N) = \int_0^{+\infty} W_i(\{\Phi^N_{(y_k +s_ku,z_k)} \geq t\})\,\dlat t
     $$
     is convex and even for all $u\in \s^{n-1}$ and $y =(y_1,\dots,y_N)\in (u^\perp)^N$.
\end{lemma}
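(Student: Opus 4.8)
The plan is to deduce convexity by restricting $F_{u,y}$ to an arbitrary line and invoking Lemma~\ref{l:level_sets_lps} together with Corollary~\ref{c:shepard_convexity}, and to deduce evenness from the reflection invariance of the construction. Fix $u$, $y=(y_1,\dots,y_N)$ and the heights $z_1,\dots,z_N$; to prove convexity it is enough to show that $\tau\mapsto F_{u,y}(a+\tau b)$ is convex for every $a,b\in\R^N$. Putting $x_i=y_i+a_iu$ and $\lambda_i=b_i$ one has $y_i+(a_i+\tau b_i)u=x_i+\lambda_i\tau u$, so along this line the points generating the stochastic model are exactly those of Lemma~\ref{l:level_sets_lps} with $\theta=u$. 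The first step is then to express the superlevel sets of the model as projected horizontal slices of the random body $E_{N,p}$ (if $p\le0$) or $H_{N,p}$ (if $p>0$): unwinding the definition of $\Phi^N$ through $\mathrm{epi}\bigl((\Phi^N)^p\bigr)=E_{N,p}$ for $p<0$, $\mathrm{epi}(-\log\Phi^N)=E_{N,0}$ for $p=0$ and $\mathrm{hyp}\bigl((\Phi^N)^p\bigr)=H_{N,p}$ for $p>0$, and using the monotonicity of $t\mapsto t^p$ (resp. $t\mapsto-\log t$), one gets for every level $t>0$ that $\{\Phi^N_{(x_k+\lambda_k\tau u,\,z_k)}\ge t\}$ equals $P_{e_{n+1}^\perp}\bigl(E_{N,p}(\tau)\cap\pi_{\rho(t)}\bigr)$ when $p\le0$, with $\rho(t)=t^p$ for $p<0$ and $\rho(t)=-\log t$ for $p=0$, and equals $P_{e_{n+1}^\perp}\bigl(H_{N,p}(\tau)\cap\pi_{t^p}\bigr)$ when $p>0$, where $E_{N,p}(\cdot)$ and $H_{N,p}(\cdot)$ are as in Lemma~\ref{l:level_sets_lps}.

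Now fix a level $t>0$. The heights occurring in $E_{N,p}(\tau)$ fill the half-line $[\min_{c\in C}\sum_i c_i\rho_i(p),+\infty)$, and those occurring in $H_{N,p}(\tau)$ fill the segment $[0,\max_{c\in C}\sum_i c_i\rho_i(p)]$; in both cases this set is independent of $\tau$. Consequently, for this fixed $t$, the projected slice above is either empty for all $\tau$ — so that $\tau\mapsto W_i(\{\Phi^N_{(x_k+\lambda_k\tau u,z_k)}\ge t\})$ vanishes identically — or nonempty for all $\tau$, in which case Lemma~\ref{l:level_sets_lps} presents $\{P_{e_{n+1}^\perp}(E_{N,p}(\tau)\cap\pi_{\rho(t)})\}_\tau$ (resp. with $H_{N,p}$) as a linear parameter system in the direction $u$, and Corollary~\ref{c:shepard_convexity} makes $\tau\mapsto W_i(\{\Phi^N_{(x_k+\lambda_k\tau u,z_k)}\ge t\})$ convex. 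Since the heights $z_k$ are fixed, $\Phi^N_{(x_k+\lambda_k\tau u,z_k)}$ is a bounded, compactly supported function, so $F_{u,y}(a+\tau b)=\int_0^{+\infty}W_i(\{\Phi^N_{(x_k+\lambda_k\tau u,z_k)}\ge t\})\,\dlat t$ is finite; being an integral of a family of nonnegative convex functions of $\tau$, it is convex, and since $a,b$ were arbitrary, $F_{u,y}$ is convex.

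For evenness, observe that $R_u(y_k+s_ku)=y_k-s_ku$ because $y_k\in u^\perp$. As $R_u$ is linear on $\R^n$ and the $M$-addition operates coordinatewise, replacing each generating point $X_k$ by $R_uX_k$ (keeping $Z_k$) transforms $E_{N,p}$, and likewise $H_{N,p}$, into its image under $R_u\times\mathrm{id}_\R$. Intersecting with the hyperplanes $\pi_z$ and projecting onto $e_{n+1}^\perp$ commutes with this map, so $\{\Phi^N_{(y_k-s_ku,z_k)}\ge t\}=R_u\{\Phi^N_{(y_k+s_ku,z_k)}\ge t\}$ for every $t$; the rigid-motion invariance of $W_i$ then yields $F_{u,y}(-s)=F_{u,y}(s)$.

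The main obstacle is the first step: identifying the superlevel sets of the stochastic model with projected horizontal slices of $E_{N,p}$ or $H_{N,p}$ requires a careful case analysis in the sign of $p$ and some care with degenerate (empty or lower-dimensional) slices. Once this translation is in place, convexity and evenness follow directly from Lemma~\ref{l:level_sets_lps}, Corollary~\ref{c:shepard_convexity}, the stability of convexity under integration, and the symmetries of the construction.
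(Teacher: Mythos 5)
Your proposal is correct and follows essentially the same route as the paper: restrict to a line in $\R^N$, identify each superlevel set $\{\Phi^N_{(\cdot)}\ge t\}$ with a projected horizontal slice of $E_{N,p}(\tau)$ or $H_{N,p}(\tau)$, invoke Lemma~\ref{l:level_sets_lps} and Corollary~\ref{c:shepard_convexity}, integrate in $t$, and get evenness from the reflection identity. Your treatment is in fact slightly more careful than the paper's on two minor points (the reparametrized slice height $\rho(t)$ rather than $t$, and the $\tau$-independence of the set of nonempty slice heights), but these are refinements of the same argument, not a different one.
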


\begin{proof}
    For the reader's convenience we will use the notation $\bar s =(s_1,\dots,s_N)$. Let $\bar s,\bar s^\prime$ and $\lambda \in (0,1)$. As before, we only give a proof of the case $p\leq 0$; note that
    \begin{align*}
          F_{u,y} \bigl((1-\lambda)\bar s + \lambda \bar s^\prime\bigr) &= \int_0^{+\infty} W_i(\{\Phi^N_{(y_k + ((1-\lambda) s_k + \lambda s_k^\prime) u,z_k)} \geq t\})\,\dlat t\\
          &=\int_0^{+\infty} W_i\bigl(P_{e_{n+1}^\perp}(E_{p,N}(\lambda) \cap \pi_t)\bigr)\,\dlat t,
    \end{align*}
    where
    \begin{align}
        E_{p,N}(\lambda) &=  \oplus_C\bigl(\{R_{\rho_i(p)}\bigl(y_i + (1-\lambda) s_i + \lambda s_i^\prime\bigr) \theta\bigr)\}_{i=1}^N\bigr)\\
        &= \oplus_C\bigl(\{R_{\rho_i(p)}\bigl(y_i + s_i u + \lambda (s_i^\prime - s_i) u\bigr)\}_{i=1}^N\bigr)\\
        &= \oplus_C\bigl(\{R_{\rho_i(p)}(x_i + \mu_i \lambda u)\}_{i=1}^N\bigr),
    \end{align}
    and $\mu_i = s_i^\prime - s_i$ for all $i=1,\dots,N$. Hence, using Lemma \ref{l:level_sets_lps} we get that $\{P_{e_{n+1}^\perp}(E_{p,N}(\lambda) \cap \pi_t)\}_{\lambda \in (0,1)}$ is a linear parameter system. Thus, Corollary \ref{c:shepard_convexity} imply that $\lambda \mapsto  F_{u,y} \bigl((1-\lambda)\bar s + \lambda \bar s^\prime\bigr)$ is convex proving that $F_{u,y}(\bar s)$ is so. In addition, taking into account that
    $$
    \oplus_C\bigl(\{R_{\rho_i(p)}(y_i - s_iu)\}_{i=1}^N\bigr) = R_u\Bigl(\oplus_C\bigl(\{R_{\rho_i(p)}(y_i + s_iu)\}_{i=1}^N\bigr) \Bigr)
    $$
    for every $C\subset \R^N$, it is straightforward to check that 
    $$
    \{\Phi^N_{(y_k -s_ku,z_k)} \geq t\} = R_u\{\Phi^N_{(y_k+s_ku, z_k)}\geq t\}
    $$
    for all $t\in \R_+$. Therefore, $F_{u,y}(-\bar s) = F_{u,y}(\bar s )$ for all $u \in \s^{n-1}$ and $y \in (u^\perp)^N$. 
\end{proof}
We can now finish the proof of our first main result.
\begin{proof}[Proof of Theorem \ref{t:stoch_iso_quermass}]
   We will use for short the notation $\bar z =(z_1,\dots,z_N)\in(\R_+)^N$. Let $\{(X_k,Z_k)\}_{k=1}^N$ be independent random vectors distributed w.r.t. $f$. Note that using Fubini's theorem
\begin{align*}
    \E&\Bigl[W_i\Bigl(\Phi^N_{(X_k,Z_k)}\Bigr)\Bigr]\\
    &= \left(\int f\right)^{-N} \int_{(\R^n)^N}\int_{(\R_+)^N}W_i\Bigl(\Phi^N_{(x_k,z_k)}\Bigr) \prod_{i=1}^N \chi_{_{\{f \geq z_i\}}}(x_i)\, \dlat \bar z\,\dlat  x  \\
    &=  \left(\int f\right)^{-N} \int_{(\R_+)^N}\int_{(\R^n)^N} \left(\int_0^{+\infty} W_i(\{\Phi^N_{(x_k,z_k)}\geq t\})\,\dlat t \right) \prod_{i=1}^N \chi_{_{\{f \geq z_i\}}}(x_i)\,\dlat  x \, \dlat \bar z\\
    &= \left(\int f\right)^{-N} \int_{(\R_+)^N}\int_{(\R^n)^N} F_{\bar z} (x_1,\dots,x_N) \prod_{i=1}^N g_i(x_i) \,\dlat  x \, \dlat \bar z,
\end{align*}
where, fixed $z \in (\R_+)^N$, $F_{\bar z} : (\R^n)^N \to \R_+$ is the function given by 
$$
F_{\bar z}(x_1,\dots,x_N) = \int_0^{+\infty} W_i(\{\Phi^N_{(x_k,z_k)}\geq t\})\,\dlat t
$$
and $g_i(x_i) = \chi_{_{\{f \geq z_i\}}}(x_i)$ for all $i=1,\dots,N$. Hence, Lemma \ref{l:epi_is_lps} implies that $F_{\bar z}$ satisfies the conditions of Theorem \ref{t:Christ_BL} (those of the quasi-convex analogue) for all $\bar z \in (\R_+)^N$. Thus,   
\begin{align*}
    \int_{(\R^n)^N} F_{\bar z} (x_1,\dots,x_N) \prod_{i=1}^N g_i(x_i) \,\dlat \bar x  &\geq \int_{(\R^n)^N} F_{\bar z} (x_1,\dots,x_N) \prod_{i=1}^N g^*_i(x_i) \,\dlat x \\
    &= \int_{(\R^n)^N} F_{\bar z} (x_1,\dots,x_N) \prod_{i=1}^N \chi_{_{\{f \geq z_i\}^*}}(x_i) \,\dlat  x,
\end{align*}
for any $\bar z \in (\R_+)^N$. Putting all together, 
\begin{align*}
 \E&\left[W_i\Bigl(\Phi^N_{(X_k,Z_k)}\Bigr)\right]\\
 &= \left(\int f\right)^{-N}\int_{(\R^n)^N}\int_{(\R_+)^N}F_{\bar z}(x_1,\dots,x_N) \prod_{i=1}^N \chi_{_{\{f \geq z_i\}}}(x_i)\, \dlat \bar z\,\dlat  x  \\
    &\geq \left(\int f^*\right)^{-N}\int_{(\R^n)^N}\int_{(\R_+)^N}F_{\bar z}(x_1,\dots,x_N) \prod_{i=1}^N \chi_{_{\{f \geq z_i\}}}^*(x_i)\, \dlat \bar z \,\dlat x  \\
    &= \E\left[W_i\Bigl(\Phi^N_{(X^*_k,Z^*_k)}\Bigr)\right]
\end{align*}
as we wanted to prove.
\end{proof}

\begin{remark}
    Let $f_1,\dots,f_n : \R^n \to \R_+$ be a family of $p$-concave functions, with $p\in \R\cup\{\pm \infty\}$.  It was shown in \cite{MilRot13} that
    $$
    V(f_1,\dots,f_n) \geq V(f^*_1,\dots,f_n^*)
    $$
    generalizing, therefore, Theorem \ref{t:func_iso_quermass}. A stochastic analogue of the latter can be deduced by using the argument of Theorem \ref{t:stoch_iso_quermass}'s proof.
\end{remark}

We now give a proof of Theorem \ref{t:stoc_affine_sobolev}.

\begin{proof}[Proof of Theorem \ref{t:stoc_affine_sobolev}]
    We assume, for the sake of simplicity, that $\|f\|_{\infty} = 1$. In addition, we use the notation $\bar z =(z_1,\dots,z_N)\in(\R_+)^N$ for short. First note that 
    \begin{align*}
        \E&\bigl[\nu\bigl(\pp \Phi^N_{(X_k,Z_k)}\bigr)\bigr]\\
        &= \left(\int f\right)^{-N} \int_{(\R^n)^N}\int_{(\R_+)^N} \nu\bigl(\pp \Phi^N_{(x_k,z_k)}\bigr) \prod_{i=1}^N \chi_{_{\{f \geq z_i\}}}(x_i) \, \dlat \bar z \,\dlat x \\
        &=\left(\int f\right)^{-N} \int_{(\R_+)^N}\int_{(\R^n)^N} F_{\bar z}(x_1,\dots,x_N) \prod_{i=1}^N g_i(x_i)\,\dlat  x \, \dlat \bar z,
    \end{align*}
   where, for any fixed $\bar z \in (\R_+)^N$, $F_{\bar z} : (\R^n)^N \to \R_+$ is the function given by
    \begin{align*}
        F_{\bar z}(x_1,\dots,x_N) = \nu\bigl(\pp \Phi^N_{(x_k,z_k)}\bigr),
    \end{align*}
    and $g_i(x_i) = \chi_{_{\{f \geq z_i\}}}(x_i)$ for all $i=1,\dots,N$. Hence, fixing $\bar z \in (\R_+)^N$ and $u\in \s^{n-1}$,
    \begin{align*}
        F&_{\bar z,y, u} (\bar s)\\
        &= \int_{u^\perp}\int_{\R} \bigl[w+ru \in \{\|w + ru\|_{\pp \Phi^N(y_i + s_i u_i,z_i)}\leq 1\}\bigr]\psi(w + r u)\,\dlat r \,\dlat w\\
        &=\int_{u^\perp}\int_{\R} \bigl[w + ru \in \{nV(\Phi^N(y_i + s_i u,z_i)[n-1],\chi_{_{[0,w+ru]}}) \leq 1\}\bigr]\psi(w + r u) \,\dlat r\, \dlat w\\
        &=\int_{u^\perp}\int_{\R} \chi_{_{\{g(\bar s,r) \leq 1\}}} \psi(w + r u)\,\dlat r \,\dlat w,\\
        &=\int_{u^\perp}\varphi_w(\bar s)\,\dlat w,
    \end{align*}
    where, when needed, we use $[x\in A]$ for $\chi_{_A}$ to make reading easier. Moreover, $\varphi_w : u^\perp \times(\R)^N \to \R_+$ is the function given by 
    $$
    \varphi_w(\bar s) = \int_{\R}\chi_{_{\{g(\bar s,r) \leq 1\}}} \psi(w + r u)\,\dlat r
    $$
    and $g:\R^N \times \R\to \R_+$ is defined as $g(\bar s,r) = nV(\Phi^N_{(y_i + s_i u,z_i)}[n-1],\chi_{_{[0,w+ru]}})$.
    
    Secondly, for any given $\lambda \in (0,1)$
    \begin{align*}
    V&(\Phi^N_{(y_i + ((1-\lambda) s_i + \lambda s_i^\prime) u,z_i)}[n-1],\chi_{_{[0,w+((1-\lambda)r + \lambda r^\prime)u]}})\\
    &= \int_0^{1} V\bigl(P_{e_{n+1}^\perp}\bigl(E_{p,N}(\lambda)\cap \pi_t\bigr)[n-1],[0,w+ ru + \lambda (r^\prime - r)u]\bigr) \,\dlat t.
    \end{align*}
    Moreover, note that $\{[0,w+\bigl((1-\lambda)r + \lambda r^\prime\bigr)u]\}_{\lambda \in (0,1)}$ is a linear parameter system in the direction $u\in \s^{n-1}$. Hence, Lemma \ref{l:level_sets_lps} together with Theorem \ref{t:convexity_Mixed_Volumes)}, imply that  $(\bar s,r) \to g(\bar s,r)$ is jointly convex. Thus, we have for every $w \in u^\perp$ that 
    $$
    (\bar s, r)\mapsto\chi_{_{\{g(\bar s,r) \leq 1\}}} \psi(w + r u)
    $$
    is $(-1/n)$-concave. As a consequence, Corollary \ref{c:marginals} implies that $\bar s \mapsto \varphi_w(\bar s)$ is $\alpha$-concave, with $\alpha = -1/(n-1)$ (and, in particular, quasi-concave), for every $w \in u ^\perp$. Furthermore, taking into account that 
     $$
    \{\Phi^N_{(y_i -s_iu,z_i)} \geq t\} = R_u\{\Phi^N_{(y_i+s_iu,z_i) }\geq t\}
    $$
    for all $t\in\R_+$, we have that $g(-\bar s,-r) = g(\bar s,r)$. The latter, together with the rotational invariance of $\psi$, yield that $\varphi_w (\bar s) =  \varphi_w (-\bar s)$  for any $w \in u^\perp$.
    
    Now, using Fubini's theorem
    \begin{align*}
        \E&\bigl[\nu\bigl(\pp \Phi^N_{(X_k,Z_k)}\bigr)\bigr] \\
        &= \left(\int f\right)^{-N}\int_{(\R_+)^N}\int_{(u^\perp)^{N+1}}\left(\int_{\R^N}\varphi_w(\bar s)\prod_{i=1}^N h_i(s_i) \,\dlat \bar s \right)\, \dlat w \, \dlat y \,\dlat \bar z,
    \end{align*}
    where $h_i(s_i) = g_i(y_i + s_iu)$ for all $i =1 ,\dots , N$. Thus, using \eqref{e:BBL_Steiner} we get that
    \begin{align*}
        \int_{\R^N}\varphi_w(\bar s)\prod_{i=1}^N h_i(s_i) \,\dlat \bar s \leq \int_{\R^N}\varphi_w(\bar s)\prod_{i=1}^N h_i^*(s_i) \,\dlat \bar s 
    \end{align*}
    for every $w \in u^\perp$ and $\bar z \in (\R_+)^N$. Therefore, rolling back
    \begin{align*}
       \E&\bigl[\nu\bigl(\pp \Phi^N_{(X_k,Z_k)}\bigr)\bigr]\\
       &\leq \left(\int f\right)^{-N} \int_{(\R_+)^N}\int_{(\R^n)^N} F_{\bar z}(x_1,\dots,x_N) \prod_{i=1}^N g_i^u(x_i)\,\dlat  x \, \dlat \bar z\\
       &=  \left(\int f^u\right)^{-N} \int_{(\R_+)^N}\int_{(\R^n)^N} F_{\bar z}(x_1,\dots,x_N) \prod_{i=1}^N g_i^u(x_i)\,\dlat  x \, \dlat \bar z.
    \end{align*}

    As mentioned in Section \ref{s:background}, for every measurable function $f:\R^n \to \R_+$ with compact support there exists a sequence of the form $f_0 = f$ and $f_{n+1} = f_{n}^u$ for some $u \in \s^{n-1}$, which converges in the $L_1$ norm to $f^*$. Therefore, proceeding as in \cite{BLL74}, one gets that
    $$
    \int_{(\R^n)^N} F_{\bar z}(x_1,\dots,x_N) \prod_{i=1}^N g_i^u(x_i)\,\dlat  x  \leq \int_{(\R^n)^N} F_{\bar z}(x_1,\dots,x_N) \prod_{i=1}^N g_i^*(x_i)\,\dlat  x.
    $$
    for every $\bar z \in (\R_+)^N$. The latter yields that
   $$
    \E\Bigl[\nu\bigl(\pp \Phi^N_{(X_k,Z_k)}\bigr)\Bigr] \leq \E\Bigl[\nu\bigl(\pp \Phi^N_{(X^*_k,Z^*_k)}\bigr)\Bigl],
    $$
    as we wanted to see. 
\end{proof}

\begin{remark}\label{r:geometrical}
    We finish this section by showing how Theorems \ref{t:stoch_iso_quermass} and \ref{t:stoc_affine_sobolev} recover their geometrical counterparts. Let $K\subset\R^n$ be a convex body and $\{(X_k,Z_k)\}_{k=1}^N$ be independent random vectors uniformly distributed w.r.t. $\text{hyp}(\chi_{_K}) = K\times[0,1]$. Since $\chi_{_K}$ is a $(+\infty)$-concave function, its stochastic model is given by 
\begin{align*}
    \Phi^N_{(X_k,Z_k)}(x) = \sup\{\lim_{p\to +\infty} z^{1/p} : (x,z) \in h_{N,+\infty} \} = 1
\end{align*}
for all $x \in  h_{N,+\infty} = \oplus_C\bigl(\{X_1\},\dots,\{X_N\}\bigr) = [X_1,\dots,X_N]C$ and zero otherwise. In other words, $\Phi^N_{(X_k,Z_k)} = \chi_{_{[X_1,\dots,X_N]C}}$. Thus, on the one hand, applying Theorem \ref{t:stoch_iso_quermass} to $\chi_{_K}$ we get that for every $i=1,\dots,n-1$
\begin{equation}\label{e:general_stoch_iso_quermass}
    \E\Bigl[W_i\bigl([X_1,\dots,X_N]C\bigr)\Bigr] \geq \E\Bigl[W_i\bigl([X^*_1,\dots,X^*_N]C\bigr)\Bigr].
\end{equation}
 Note that \eqref{e:general_stoch_iso_quermass} recovers \eqref{e:groemer_quermass} when $C = \conv\{e_1, \dots, e_N\}$. Although not explicitly proved, this generalization of \eqref{e:groemer_quermass} was pointed out in \cite[Remark~4.4]{PP12}.
 
On the other hand, using Theorem \ref{t:stoc_affine_sobolev} one recovers that
\begin{equation}\label{e:empirical_Petty}
    \E\Bigl[\nu\bigl(\pp[X_1,\dots,X_N]C\bigr)\Bigr] \leq \E\Bigl[\nu\bigl(\pp[X^*_1,\dots,X^*_N]C\bigl)\Bigr],
\end{equation}
which was proved in \cite[Theorem~1.2]{PPT}.
\end{remark}

\section{Recovering deterministic results}\label{s:deterministic}
This section is devoted to study how to derive results for deterministic functions from their stochastic models. In this regard, it is enough for our purposes to consider the notion of \emph{epi-convergence}. 

Let $\{f_n\}_{n\in\N}$ be a sequence of functions. We say that $\{f_n\}_{n\in\N}$ epi-converges to $f$ or simply $e-\lim_{n\to +\infty} f_n = f$ if and only if 
$$
\mathrm{epi}(f_n) \overset{\mathrm{P-K}}{\to}\mathrm{epi}(f),
$$
where P-K denotes convergence in the usual Painlevé-Kuratowski sense (see e.g. \cite[Chapter~4]{RW97}). Analogously, we say that  $\{f_n\}_{n\in\N}$ \emph{hypo-converges} to $f$ (or $h-\lim_n f_n = f$) if and only if 
$$
\mathrm{hyp}(f_n) \overset{\mathrm{P-K}}{\to} \mathrm{hyp}(f).
$$ 

It is worth mentioning that convergence in the Hausdorff distance is stronger than convergence in the P-K sense and, when dealing with compact sets, both notions are equivalent (see e.g. \cite[p.~117]{RW97}). In a more detailed manner, let $\{A_n\}_{n\in\N}$ be a sequence of sets. If 
$$
A_n \overset{\delta^H}{\to} A,
$$
then $A_n \overset{\mathrm{P-K}}{\to}\,\overline{A}$, where $\overline{\cdot}$ denotes the closure of a given set. Since we work with closed epigraphs and hypographs  convergence in the Hausdorff distance will always imply convergence in the P-K sense.

We would like to remark that epi-convergence has been widely used in the literature, especially in the study of valuations on convex functions (see e.g. \cite{CLM17_2,CLM19,FM21,CLM24} and the references therein). We will use the following lemma originally proved in \cite[Lemma~5]{CLM19}:
\begin{lemma}\label{l:convergence_convex}
    Let $u:\R^n  \to \R \cup \{+\infty\}$ be a lower semi-continuous convex function such that $\lim_{|x| \to +\infty} u(x) = +\infty$. If $u_n : \R^n \to \R \cup \{+\infty\}$ is sequence of lower semi-continuous convex functions, with  $\lim_{|x| \to +\infty} u_n(x) = +\infty$ for all $n\in \N$, satisfying that $e-\lim_n u_n = u$. Then
    $$
    \{u_n \leq t\} \overset{\delta^H}{\to} \{u\leq t\}
    $$
    for all $t\neq\min_{x\in \R^n} u(x)$.
\end{lemma}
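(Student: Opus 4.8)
The plan is to establish the two Painlevé–Kuratowski inclusions between $\{u_n\le t\}$ and $\{u\le t\}$ and then upgrade the resulting P-K convergence to Hausdorff convergence by producing a single compact set containing $\{u_n\le t\}$ for all large $n$; it is in this last, compactness step that convexity of the $u_n$ and coercivity of $u$ enter in an essential way. Write $m=\min_{x\in\R^n}u(x)$, which is attained because $u$ is lower semicontinuous with $u(x)\to+\infty$ as $|x|\to+\infty$, and fix $x_0$ with $u(x_0)=m$. Throughout I only use that $\mathrm{epi}(u_n)\to\mathrm{epi}(u)$ in the P-K sense, in the two forms (i) $\limsup_n\mathrm{epi}(u_n)\subseteq\mathrm{epi}(u)$ and (ii) $\mathrm{epi}(u)\subseteq\liminf_n\mathrm{epi}(u_n)$. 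From (i): if $x_{n_k}\to x$ with $u_{n_k}(x_{n_k})\le t$, then $(x_{n_k},t)\in\mathrm{epi}(u_{n_k})$ converges to $(x,t)$, so $u(x)\le t$; hence $\limsup_n\{u_n\le t\}\subseteq\{u\le t\}$ for every $t$. From (ii): for every $x$ and every $\alpha\ge u(x)$ there are $x_n\to x$ with $u_n(x_n)\le\alpha_n\to\alpha$; in particular, if $u(x)<t$ then $u_n(x_n)<t$ for $n$ large, so $x\in\liminf_n\{u_n\le t\}$, and therefore $\{u<t\}\subseteq\liminf_n\{u_n\le t\}$.

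Next I claim that for each $s>m$ there exist $R_s>0$ and $N_s\in\N$ with $\{u_n\le s\}\subseteq R_sB^n_2$ for all $n\ge N_s$. If not, along a subsequence there are $y_k$ with $u_{n_k}(y_k)\le s$ and $|y_k|\to+\infty$. By (ii) applied to $x_0$ with $\alpha=m$ there are $a_n\to x_0$ with $u_n(a_n)\le\alpha_n\to m<s$, so $u_{n_k}(a_{n_k})<s$ for $k$ large; since $u_{n_k}$ is convex, the segment $[a_{n_k},y_k]$ is contained in $\{u_{n_k}\le s\}$. As $|y_k-a_{n_k}|\to+\infty$ and $v_k:=(y_k-a_{n_k})/|y_k-a_{n_k}|\in\s^{n-1}$, pass to a further subsequence with $v_k\to v\in\s^{n-1}$. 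Then for every fixed $\rho>0$ the point $a_{n_k}+\rho v_k$ lies on that segment once $k$ is large, so $(a_{n_k}+\rho v_k,s)\in\mathrm{epi}(u_{n_k})$; letting $k\to+\infty$ and using (i) gives $u(x_0+\rho v)\le s$ for all $\rho>0$, which contradicts $u(x)\to+\infty$ since $|x_0+\rho v|\to+\infty$. This proves the claim.

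Now fix $t\neq m$. If $t<m$, then $\{u\le t\}=\emptyset$; taking $s=m+1$ in the claim, $\{u_n\le t\}\subseteq\{u_n\le s\}\subseteq R_sB^n_2$ for $n$ large, so if $\{u_n\le t\}$ were nonempty for infinitely many $n$, picking $y_n$ in it along those indices gives a bounded sequence, hence a subsequence $y_{n_k}\to y$ with $u_{n_k}(y_{n_k})\le t$, forcing $u(y)\le t<m$ by (i) — impossible; thus $\{u_n\le t\}=\emptyset=\{u\le t\}$ for $n$ large. If $t>m$, then $\{u<t\}$ is nonempty (it contains $x_0$), and for a lower semicontinuous convex function with nonempty strict sublevel set one has $\{u\le t\}=\overline{\{u<t\}}$: indeed, if $u(x)=t$ then by convexity $u\bigl((1-\lambda)x+\lambda x_0\bigr)\le(1-\lambda)t+\lambda m<t$ for $\lambda\in(0,1]$, and letting $\lambda\to0^+$ places $x$ in the closure of $\{u<t\}$, while $\overline{\{u<t\}}\subseteq\{u\le t\}$ since $\{u\le t\}$ is closed. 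Recalling that the inner limit $\liminf_n\{u_n\le t\}$ is always closed and contains $\{u<t\}$, it therefore contains $\{u\le t\}$; combined with $\limsup_n\{u_n\le t\}\subseteq\{u\le t\}$ this yields $\{u_n\le t\}\overset{\mathrm{P-K}}{\to}\{u\le t\}$. Finally, by the claim (with $s=t$) there is $R_t$ with $\{u_n\le t\}\subseteq R_tB^n_2$ for $n$ large, and $\{u\le t\}\subseteq R_tB^n_2$ as well, so all these sets lie in a common compact set; on such a family P-K convergence and Hausdorff convergence agree (as recalled above in this section), giving $\{u_n\le t\}\overset{\delta^H}{\to}\{u\le t\}$.

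The main obstacle is precisely the uniform boundedness of the sublevel sets established in the second paragraph: P-K convergence of epigraphs by itself does not control the size of $\{u_n\le t\}$, and for non-convex functions these sublevel sets can fail to converge even at levels strictly above the minimum; convexity is what lets one propagate a recovery point of $u$ along arbitrarily long segments and clash with the coercivity of $u$. The hypothesis $t\neq m$ is equally indispensable: at $t=m$, form (ii) only provides $x_n\to x_0$ with $\limsup_n u_n(x_n)\le m$, not $u_n(x_n)\le m$, and indeed $\{u_n\le m\}$ may be empty for every $n$ (e.g. $u_n=u+1/n$), so no Hausdorff limit exists there.
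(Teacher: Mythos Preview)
Your proof is correct. The paper does not give its own argument for this lemma; it simply quotes the statement and cites \cite[Lemma~5]{CLM19}. So there is no proof in the paper to compare against, and your write-up supplies a complete, self-contained justification.

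For the record, your strategy is the standard one and is carried out cleanly: the two Painlev\'e--Kuratowski inclusions are read off directly from $\limsup_n\mathrm{epi}(u_n)\subseteq\mathrm{epi}(u)\subseteq\liminf_n\mathrm{epi}(u_n)$; the identity $\{u\le t\}=\overline{\{u<t\}}$ for $t>m$ handles the boundary level correctly; and the decisive step---the eventual uniform boundedness of $\{u_n\le s\}$ for $s>m$---is exactly where convexity of the $u_n$ and coercivity of $u$ are used, via the ``ray'' argument that pushes a segment in $\{u_{n_k}\le s\}$ out to infinity and feeds it back through the $\limsup$ inclusion. The upgrade from P-K to Hausdorff convergence then follows since all sets involved are compact and sit in a common ball. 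Your closing remarks on why $t=m$ must be excluded are also accurate.
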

Lemma \ref{l:convergence_convex} immediately gives the following.
\begin{lemma}\label{l:convergence_level_sets}
    Let $f:\R^n \to \R_+$ be an integrable $p$-concave function, with $p\in \R\cup\{+\infty\}$ and $\{f_n\}_{n\in \N}$ a sequence of functions within the same class.
    \begin{enumerate}
        \item If $p> 0$ and $h-\lim_n f^p_n = f^p$, then 
        $$
        \{f_n \geq t\} \overset{\delta^H}{\to} \{f\geq t\}
        $$
        for all $t\neq \max_{x\in\R^n}f(x)$.
        
        \item If $p< 0$ and $e-\lim_n f^p_n = f^p$, then 
        $$
        \{f_n \geq t\} \overset{\delta^H}{\to} \{f\geq t\}
        $$
        for all $t\neq \max_{x\in\R^n}f(x)$.
        
        \item If $p = 0$ and $e-\lim_n (-\log f_n) = -\log f$, then 
        $$
        \{f_n \geq t\} \overset{\delta^H}{\to} \{f\geq t\}
        $$
        for all $t\neq \max_{x\in\R^n}f(x)$.
    \end{enumerate}
\end{lemma}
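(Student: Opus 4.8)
The plan is to derive all three cases directly from Lemma~\ref{l:convergence_convex}, the point being that each regime comes with a canonical lower semicontinuous convex function whose sublevel sets are precisely the superlevel sets $\{f_n\geq t\}$. First I would set up this dictionary: for $p<0$ take $u_n=f_n^{\,p}$ and $u=f^{\,p}$ (with the convention $0^p=+\infty$); for $p=0$ take $u_n=-\log f_n$ and $u=-\log f$; and for $0<p$ take $u_n=-f_n^{\,p}$ and $u=-f^{\,p}$, extended by $+\infty$ off the support of the corresponding function (the value $p=+\infty$ is the degenerate one in which $f$ is a multiple of an indicator and $\{f_n\geq t\}$ is just $\supp(f_n)$, which is then governed by the same convergence of supports). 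In every case convexity of $u_n,u$ is the defining property of $p$-concavity; lower semicontinuity follows from upper semicontinuity of $f_n,f$ together with the monotonicity of $s\mapsto s^{p}$ (resp. $s\mapsto -\log s$), bearing in mind the minus sign when $p>0$; and coercivity, i.e. $\lim_{|x|\to\infty}u(x)=+\infty$, holds because the superlevel sets of $f$ are bounded — for $p\leq 0$ this forces $f\to 0$ at infinity so that $f^{p}$ (resp. $-\log f$) blows up there, while for $p>0$ it forces $\supp(f)$ itself to be bounded, so the $+\infty$-extension of $-f^{p}$ equals $+\infty$ outside a bounded set. Thus $u_n,u$ are in the setting of Lemma~\ref{l:convergence_convex}.

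Next I would match the hypotheses and then the conclusions. In cases (2) and (3) the assumption is literally $e-\lim_n u_n=u$; in case (1) the assumed hypo-convergence of $f_n^{\,p}$ to $f^{\,p}$ becomes $e-\lim_n u_n=u$ after applying the linear homeomorphism $(x,z)\mapsto(x,-z)$ of $\R^n\times\R$, which interchanges epigraphs and hypographs. Since $s\mapsto s^{p}$ ($p<0$) and $s\mapsto -\log s$ are decreasing bijections of $(0,\infty)$, while $s\mapsto s^{p}$ ($p>0$) is increasing, for every fixed $t>0$ one has $\{f_n\geq t\}=\{u_n\leq c(t)\}$, and likewise for $f$, where $c(t)$ equals $t^{p}$, $-\log t$ and $-t^{p}$ respectively. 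Finally, $\min_x u(x)$ equals $\bigl(\max_x f(x)\bigr)^{p}$, $-\log\max_x f(x)$ and $-\bigl(\max_x f(x)\bigr)^{p}$ respectively, so the exceptional level $c(t)=\min u$ in Lemma~\ref{l:convergence_convex} occurs exactly when $t=\max_{x\in\R^n}f(x)$. Applying that lemma at level $c(t)$ then gives $\{f_n\geq t\}\overset{\delta^H}{\to}\{f\geq t\}$ for every $t>0$ with $t\neq\max_{x\in\R^n}f(x)$; for $t\leq 0$ the statement is vacuous, since then $\{f\geq t\}=\R^n$.

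I do not anticipate a genuine obstacle — the cases $p\leq 0$ amount to a change of variable — but two points in the regime $p>0$ deserve care. There $f^{p}$ is \emph{concave}, not convex, so one must reflect before invoking Lemma~\ref{l:convergence_convex}; equivalently, one may phrase the argument through horizontal slices, noting that $\operatorname{hyp}(f_n^{\,p})\to\operatorname{hyp}(f^{\,p})$ is a convergence of convex bodies and that $\{f_n\geq t\}$ is the slice of $\operatorname{hyp}(f_n^{\,p})$ at height $t^{p}$. And one must verify the coercivity hypothesis of Lemma~\ref{l:convergence_convex} for $u=-f^{p}$, which reduces to the fact that an integrable $p$-concave function with $p>0$ has bounded support: on any ray contained in its support $f^{p}$ would be a positive concave function, hence bounded below by its value at the base point of the ray, so an unbounded support would force $f^{p}$ to be bounded below by a positive constant on a set of infinite volume, contradicting integrability. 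A last, routine check is that reflecting (or slicing) preserves Painlev\'e-Kuratowski convergence, which is immediate since $(x,z)\mapsto(x,-z)$ is a linear isomorphism.
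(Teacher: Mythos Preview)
Your proposal is correct and follows the same approach as the paper: the paper simply states that Lemma~\ref{l:convergence_level_sets} ``immediately'' follows from Lemma~\ref{l:convergence_convex} and gives no further argument, whereas you have carefully spelled out the dictionary $u=f^{p}$, $-\log f$, $-f^{p}$ in the three regimes and verified the hypotheses (convexity, lower semicontinuity, coercivity) and the correspondence of exceptional levels. Your treatment of the case $p>0$, including the reflection to pass from hypo- to epi-convergence and the bounded-support argument via concavity on rays, is more detailed than anything in the paper and is sound.
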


We will use the stochastic models presented in \eqref{e:stoch_model_convexhull_p<=0} and \eqref{e:stoch_model_convexhullp>0} to see how Theorems \ref{t:stoch_iso_quermass} and \ref{t:stoc_affine_sobolev} recover their deterministic counterparts. Let $N\geq n+1$, $f:\R^n \to \R_+$ be an integrable $p$-concave function and $\{(X_k,Z_k)\}_{k=1}^N$ be independent random vectors distributed w.r.t. f. Recall that the stochastic model of $f$ is built upon the random sets $E_{N,p},H_{N,p}\subset\R^n\times\R_+$ given by \begin{equation}
    E_{N,p} =  \begin{cases} 
      \conv\{R_{Z_1^p}(X_1),\dots,R_{Z_N^p}(X_N)\} & \mathrm{if}\quad p< 0, \\
       \conv\{R_{-\log Z_1}(X_1),\dots,R_{-\log Z_N}(X_N)\}& \mathrm{if}\quad p=0 
   \end{cases}
\end{equation}
and
\begin{equation}
     H_{N,p} = \conv\{\widetilde{R}_{Z_1^p}(X_1),\dots,\widetilde{R}_{Z_N^p}(X_N)\}
\end{equation}
if $p>0$.

Moreover, for a given integrable $p$-concave function $f:\R^n \to \R_+$, we will consider the truncated function $f_\varepsilon (x) = f(x)\chi_{_{\{f \geq \varepsilon\} }}(x)$. Note that $f_\varepsilon \leq f$ and dominated convergence, together with the continuity of the classical quermassintegrals in Hausdorff distance, imply that 
$$
\lim_{\varepsilon \to 0^+} W_i(f_\varepsilon) =W_i(f)
$$
for every $i\in\{0,1,\dots,n-1\}$. In addition, since any rotational invariant convex measure on $\R^n$ is continuous in Hausdorff distance (see e.g. \cite[Lemma~5.2]{CEFPP15}), one has that
$$
\lim_{\varepsilon\to0^+} \nu(\pp f_\varepsilon) = \nu(\pp f),
$$
where we have used that the functional polar projection body is continuous (see \cite[Lemma~4.1]{CLM17_2}). Therefore it is enough to prove our claims for $f_\varepsilon$.

Furthermore, let $\{(X_k,Z_k)\}_{k=1}^N$ be independent random vectors distributed w.r.t. $f_\varepsilon$. It is straightforward to check that the random sets $E_{N,p}$ and $H_{N,p}$ are convex bodies. Thus, using for instance Kolmogorov's $0-1$ law and Borel-
Cantelli lemma, one can check that
\begin{align*}
     &E_{N,p} \overset{\delta^H}{\to} \mathrm{epi}(f_\varepsilon^p) \quad \mathrm{if}\quad p< 0,\\
     &E_{N,p}  \overset{\delta^H}{\to} \mathrm{epi}(-\log f_\varepsilon) \quad \mathrm{if} \quad p = 0 \quad \mathrm{and}\\
     &H_{N,p}  \overset{\delta^H}{\to} \mathrm{hyp}(f_\varepsilon^p) \quad \mathrm{if} \quad p > 0
\end{align*}
almost surely as $N\to +\infty$.

We start by proving that almost sure (a. s. for short) convergence holds when considering the functional quermassintegral of a stochastic model. Although not explicitly described, we work in an underlying probability space.
\begin{proposition}\label{p:convergence_quermass}
    Let $f:\R^n \to \R_+$ be an integrable $p$-concave function and   $\{(X_k,Z_k)\}_{k=1}^N$ be independent random vectors distributed w.r.t. $f$. Then 
    $$
    \lim_{N\to\infty}W_i\bigl(\Phi^N_{(X_k,Z_k)}\bigr) = W_i(f)
    $$
    a. s. for every $i\in\{0,1,\dots,n-1\}$.
\end{proposition}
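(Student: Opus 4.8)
The plan is to pass to the limit inside the integral representation $W_i(g)=\int_0^{+\infty}W_i\bigl(\{g\ge t\}\bigr)\,\dlat t$, using the almost sure convergence of the ambient random sets $E_{N,p},H_{N,p}$ recorded before the statement together with a dominated convergence argument in the variable $t$.

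Concretely, for $p\in\R$ I would fix an outcome in the probability-one event on which $E_{N,p}$ (resp.\ $H_{N,p}$) converges, as $N\to+\infty$, in the Hausdorff distance to $\mathrm{epi}(f^p)$ if $p<0$, to $\mathrm{epi}(-\log f)$ if $p=0$, and to $\mathrm{hyp}(f^p)$ if $p>0$ — working, if one prefers all sets to be bounded, with the truncations $f_\varepsilon$ and using that $W_i(f_\varepsilon)\to W_i(f)$ as $\varepsilon\to0^+$, as noted above. Since convergence in the Hausdorff distance of closed sets implies convergence in the Painlev\'e-Kuratowski sense, this is exactly the hypothesis of Lemma~\ref{l:convergence_level_sets} (applied with $f_n=\Phi^N_{(X_k,Z_k)}$, which lies in the same $p$-concave class), so that
$$
\{\Phi^N_{(X_k,Z_k)}\ge t\}\ \overset{\delta^H}{\longrightarrow}\ \{f\ge t\}
$$
for every $t>0$ with $t\neq\max f$, i.e.\ for Lebesgue-almost every $t>0$. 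By the continuity of the classical quermassintegrals with respect to the Hausdorff distance, this yields $W_i\bigl(\{\Phi^N_{(X_k,Z_k)}\ge t\}\bigr)\to W_i\bigl(\{f\ge t\}\bigr)$ for a.e.\ $t>0$. The remaining case $p=+\infty$ is even more direct, since then $\Phi^N_{(X_k,Z_k)}=\|f\|_\infty\,\chi_{_{\conv\{X_1,\dots,X_N\}}}$ and $\conv\{X_1,\dots,X_N\}$ converges a.s.\ in the Hausdorff distance to the support of $f$.

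To interchange this pointwise-in-$t$ convergence with $\int_0^{+\infty}\!\dlat t$ I would use the pointwise bound $\Phi^N_{(X_k,Z_k)}\le f$, which is built into the construction: as $Z_k\le f(X_k)$, each generating ray $R_{Z_k^p}(X_k)$ (for $p<0$), ray $R_{-\log Z_k}(X_k)$ (for $p=0$) or segment $\widetilde{R}_{Z_k^p}(X_k)$ (for $p>0$) is contained in $\mathrm{epi}(f^p)$, $\mathrm{epi}(-\log f)$ or $\mathrm{hyp}(f^p)$ respectively, hence — these sets being convex — so is the whole convex hull $E_{N,p}$ (resp.\ $H_{N,p}$), and this unwinds to $\Phi^N_{(X_k,Z_k)}\le f$. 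Therefore $\{\Phi^N_{(X_k,Z_k)}\ge t\}\subseteq\{f\ge t\}$ for all $t$ and all $N$, so by the monotonicity of the quermassintegrals $W_i\bigl(\{\Phi^N_{(X_k,Z_k)}\ge t\}\bigr)\le W_i\bigl(\{f\ge t\}\bigr)$, and $t\mapsto W_i\bigl(\{f\ge t\}\bigr)$ is integrable on $(0,+\infty)$ with integral $W_i(f)$ (when $W_i(f)=+\infty$ the statement follows instead from Fatou's lemma). Dominated convergence then gives $W_i\bigl(\Phi^N_{(X_k,Z_k)}\bigr)\to W_i(f)$ on the chosen event; intersecting the finitely many such events over $i\in\{0,\dots,n-1\}$ gives the claim for all $i$ at once.

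The single genuinely delicate step is this interchange of the limit in $N$ with the integral in $t$; it is precisely what the domination $\{\Phi^N_{(X_k,Z_k)}\ge t\}\subseteq\{f\ge t\}$ — or, equivalently, confining all models to a fixed body and bounding their sup-norm after truncation — together with the monotonicity of $W_i$ takes care of. The rest is routine: Lemma~\ref{l:convergence_level_sets} for the level sets, continuity of $W_i$ in the Hausdorff distance for the integrand, and the observation that Hausdorff convergence of closed sets implies Painlev\'e-Kuratowski convergence, so that the hypotheses of that lemma are met.
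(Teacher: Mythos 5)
Your proposal is correct and follows essentially the same route as the paper's proof: a.s.\ Hausdorff convergence of the random epigraphs/hypographs, Lemma~\ref{l:convergence_level_sets} for the level sets, continuity of the quermassintegrals in the Hausdorff distance, and the inclusion $\{\Phi^N_{(X_k,Z_k)}\ge t\}\subseteq\{f\ge t\}$ as the dominating bound for the interchange of limit and integral. The only difference is that you supply some extra detail the paper leaves implicit (the justification of $\Phi^N_{(X_k,Z_k)}\le f$ and the $p=+\infty$ case), and you should note that the truncation to $f_\varepsilon$ is not merely optional but is what makes the Hausdorff-convergence statement for the unbounded epigraphs meaningful, exactly as the paper sets it up.
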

\begin{proof}
We will focus on the case of $p = 0$ as the rest is analogous. Let $N\geq n+1$ and $\{(X_k,Z_k)\}_{k=1}^{N}$ be independent random vectors distributed w.r.t $f_\varepsilon$. We have that $E_{N,p} \subset\R^{n+1}$ is a convex body and that 
$$
E_{N,p}  \overset{\delta^H}{\to} epi(-\log f_\varepsilon)
$$
a. s. when $N \to +\infty$. The latter, together with Lemma \ref{l:convergence_level_sets}, yield that 
$$
\{\Phi^N_{(X_k,Z_k)} \geq t\}  \overset{\delta^H}{\to} \{f_\varepsilon\geq t\}
$$
a. s. for all $t\neq \max_{x\in \R^n}f_\varepsilon(x)$ as $N\to +\infty$. Furthermore, the continuity of the classical quermassintegrals in Hausdorff distance implies that for all $t\neq \max_{x\in \R^n}f_\varepsilon(x)$ and $i\in\{0,1,\dots,n-1\}$
$$
\lim_{N\to+\infty} W_i\bigl(\{\Phi^N_{(X_k,Z_k)} \geq t\} \bigr) = W_i\bigl( \{f_\varepsilon\geq t\}\bigr)
$$
a. s. Finally, taking into account that $\{\Phi^N_{(X_k,Z_k)} \geq t\} \subset \{f_\varepsilon\geq t\} $
for all $t\in \R_+$, dominated convergence ensure that
$$
\lim_{N\to+\infty} W_i\bigl(\Phi^N_{(X_k,Z_k)}\bigr) = W_i(f_\varepsilon)
$$
a. s.
\end{proof}

We can now see how Theorem \ref{t:stoch_iso_quermass} recovers Theorem \ref{t:func_iso_quermass}. First, Proposition \ref{p:convergence_quermass} and dominated convergence ensure that
\begin{align*}
    W_i(f_\varepsilon) = \E&\Bigr[\lim_{N\to +\infty}W_i\bigl(\Phi^N_{(X_k,Z_k)}\bigr)\Bigl]\\
    &=\lim_{N\to +\infty}\E\Bigl[W_i\bigl(\Phi^N_{(X_k,Z_k)}\bigr)\Bigr]\\
\end{align*}
The latter, together with Theorem \ref{t:stoch_iso_quermass}, imply that
\begin{align*}
     W_i(f_\varepsilon) &= \lim_{N\to +\infty}\E\Bigl[W_i\bigl(\Phi^N_{(X_k,Z_k)}\bigr)\Bigr]\\
     &\geq \lim_{N\to +\infty}\E\Bigl[W_i\bigl(\Phi^N_{(X^*_k,Z^*_k)}\bigr)\Bigr]\\
    &= \E\Bigr[\lim_{N\to +\infty}W_i\bigl(\Phi^N_{(X^*_k,Z^*_k)}\bigr)\Bigl] = W_i(f_{\varepsilon}^*).
\end{align*}
Finally, as aforementioned, we recover Theorem \ref{t:func_iso_quermass} by taking limits as $\varepsilon \to 0^+$.

We now move on seeing how Theorem \ref{t:stoc_affine_sobolev} recovers its deterministic counterpart. First we prove the following.
\begin{proposition}\label{p:convergence_polar_projection_body}
     Let $f:\R^n \to \R_+$ be an integrable $p$-concave function and $\{(X_k,Z_k)\}_{k=1}^N$ be independent random vectors distributed w.r.t. $f$. Then 
    $$
    \pp \Phi_{(X_k,Z_k)}^N \overset{\delta^H}{\to} \pp f
    $$
    a. s. when $N\to +\infty$.
\end{proposition}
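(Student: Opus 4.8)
The plan is to parallel the proof of Proposition~\ref{p:convergence_quermass}, leveraging the convergence of the underlying random convex bodies together with the continuity properties of the polar projection body operator recorded in Section~\ref{s:background}. As before, by the remarks preceding Proposition~\ref{p:convergence_quermass} it suffices to work with the truncated function $f_\varepsilon$, for which the random sets $E_{N,p}$ (resp. $H_{N,p}$) are honest convex bodies converging in Hausdorff distance to $\mathrm{epi}(f_\varepsilon^p)$, $\mathrm{epi}(-\log f_\varepsilon)$, or $\mathrm{hyp}(f_\varepsilon^p)$ almost surely as $N\to+\infty$. I will again single out the case $p=0$ and note the others are analogous.

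First I would use Lemma~\ref{l:convergence_level_sets} to deduce that, almost surely,
$\{\Phi^N_{(X_k,Z_k)}\geq t\}\overset{\delta^H}{\to}\{f_\varepsilon\geq t\}$
for every $t\neq \max_{x\in\R^n}f_\varepsilon(x)$. Next, since the projection body operator $\Pi$ is continuous with respect to the Hausdorff distance (mixed volumes, hence support functions $h_{\Pi K}(u)=|P_{u^\perp}K|_{n-1}$, are continuous under $\delta^H$), we get $\Pi\{\Phi^N_{(X_k,Z_k)}\geq t\}\overset{\delta^H}{\to}\Pi\{f_\varepsilon\geq t\}$, and then polarity is continuous under $\delta^H$ on centrally symmetric convex bodies (the remark in Section~\ref{s:background}), so $\pp\{\Phi^N_{(X_k,Z_k)}\geq t\}\overset{\delta^H}{\to}\pp\{f_\varepsilon\geq t\}$, a.s., for each such $t$. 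Equivalently, in terms of Minkowski functionals, $\|u\|_{\pp\{\Phi^N_{(X_k,Z_k)}\geq t\}}\to\|u\|_{\pp\{f_\varepsilon\geq t\}}$ uniformly in $u\in\s^{n-1}$.

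The remaining step is to pass from level sets to the functional polar projection body via the integral representation $\|u\|_{\pp g}=\int_0^{+\infty}|P_{u^\perp}\{g\geq t\}|_{n-1}\,\dlat t$. Since $\Phi^N_{(X_k,Z_k)}\leq f_\varepsilon$ pointwise, we have $\{\Phi^N_{(X_k,Z_k)}\geq t\}\subset\{f_\varepsilon\geq t\}$ for all $t$, so the integrands are dominated by the integrable function $t\mapsto|P_{u^\perp}\{f_\varepsilon\geq t\}|_{n-1}$ (integrable because $\|u\|_{\pp f_\varepsilon}<\infty$, as $f_\varepsilon$ is a bounded integrable $p$-concave function with bounded support). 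Dominated convergence then yields $\|u\|_{\pp\Phi^N_{(X_k,Z_k)}}\to\|u\|_{\pp f_\varepsilon}$ a.s. for each fixed $u$. To upgrade this to Hausdorff convergence $\pp\Phi^N_{(X_k,Z_k)}\overset{\delta^H}{\to}\pp f_\varepsilon$ one uses that all these bodies are sandwiched in a fixed compact annulus (uniform boundedness: $\{\Phi^N\geq t\}\subset\{f_\varepsilon\geq t\}$ gives a uniform outer bound on $\Pi$, hence a uniform inner bound on $\pp$, and conversely the eventual non-degeneracy from $E_{N,p}$ being full-dimensional for $N$ large gives a uniform outer bound on $\pp$), so that pointwise convergence of support functions of this equi-bounded family of convex bodies is equivalent to uniform convergence, i.e. $\delta^H$-convergence. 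Finally, letting $\varepsilon\to0^+$ and invoking the continuity $\lim_{\varepsilon\to0^+}\pp f_\varepsilon=\pp f$ noted in the text closes the argument.

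The main obstacle is the last upgrade: verifying that the family $\{\pp\Phi^N_{(X_k,Z_k)}\}_N$ stays in a fixed compact set of convex bodies bounded away from degeneracy, so that pointwise convergence of Minkowski functionals promotes to $\delta^H$-convergence; this requires a uniform two-sided control on $\Pi\{\Phi^N\geq t\}$ that holds almost surely for all large $N$, which is where one again appeals to the a.s. statement $E_{N,p}\overset{\delta^H}{\to}\mathrm{epi}(-\log f_\varepsilon)$ (and its analogues) to guarantee that the level sets are eventually uniformly non-degenerate.
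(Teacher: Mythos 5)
Your proposal is correct and follows essentially the same route as the paper: truncate to $f_\varepsilon$, use the a.s.\ Hausdorff convergence of the level sets, dominate via $\{\Phi^N_{(X_k,Z_k)}\geq t\}\subset\{f_\varepsilon\geq t\}$, apply dominated convergence to the integral representation of $h_{\p \Phi^N}=\|\cdot\|_{\pp\Phi^N}$, and finish with continuity of polarity. The only difference is that you spend effort on the pointwise-to-uniform upgrade for the support functions, which the paper asserts directly; this step is in fact automatic for support functions of convex bodies (no equi-boundedness argument is needed for that part — non-degeneracy of $\p f_\varepsilon$ is only relevant for the polarity step).
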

\begin{proof}
    Let $N\geq n+1$ and $\{(X_k,Z_k)\}_{i=1}^{N}$ be independent random vectors distributed w.r.t $f_\varepsilon$. As before, we will focus on the case of $p=0$. Moreover, since polarity is continuous w.r.t. the Hausdorff distance for convergence of centrally symmetric convex bodies, the claim is equivalent to check that 
    $$
\p \Phi^N_{(X_k,Z_k)} \overset{\delta^H}{\to} \p f
$$ 
a. s. when $N\to +\infty$.  Recall that 
$$
h_{\p f}(u) = n\int_0^{+\infty} V\bigl(\{f\geq t\}[n-1],[0,u]\bigr)\,\dlat t.
$$
We have, for all $t\neq \max_{x\in \R^n}f(x)$, that 
$$
\{\Phi^N_{(X_k,Z_k)} \geq t\}  \overset{\delta^H}{\to} \{f_\varepsilon\geq t\}
$$
a. s. when $N\to +\infty$. Hence, taking into account that  $\{\Phi^N_{(X_k,Z_k)} \geq t\} \subset \{f_\varepsilon\geq t\} $ for any $t\in \R_+$, the continuity of mixed volumes in Hausdorff distance and dominated convergence yield that 
$$
\lim_{N\to +\infty} \|h_{\p f_\varepsilon} -h_{\Phi^{N}_{(X_k,Z_k)}}\|_{\infty} = 0
$$
a. s., which is equivalent to the claim. 
\end{proof}

We finish as follows; Proposition \ref{p:convergence_polar_projection_body} and dominated convergence theorem ensure that 
\begin{align*}
    \nu(\pp f_\varepsilon) &= \E\Bigl[\nu \bigl(\lim_{N\to+\infty} \pp \Phi^N_{(X_k,Z_k)}\bigr)\Bigr]\\
    &= \E\Bigl[ \lim_{N\to+\infty}\nu \bigl(\pp \Phi^N_{(X_k,Z_k)}\bigr)\Bigl]\\
    &=\lim_{N\to+\infty}\E\Bigl[\nu\bigl(\pp\Phi^N_{(X_k,Z_k)}\bigr)\Bigl].\\
    \end{align*}
Hence, we deduce from Theorem \ref{t:stoc_affine_sobolev} that
    \begin{align*}
    \nu(\pp f_\varepsilon) &=\lim_{N\to+\infty}\E\Bigl[\nu\bigl(\pp\Phi^N_{(X_k,Z_k)}\bigr)\Bigl]\\
    &\leq  \lim_{N\to+\infty} \E\Bigl[\nu\bigl(\pp\Phi^N_{(X^*_k,Z^*_k)}\bigr)\Bigl]\\
    &=\E\Bigl[\nu \bigl(\lim_{N\to+\infty} \pp \Phi^N_{(X^*_k,Z^*_k)}\bigr)\Bigr] = \nu(\pp f_{\varepsilon}^*),
\end{align*}
which implies the claim after taking limits as $\varepsilon \to 0^+$.
\bigskip

{\bf Acknowledgments:}  We would like to thank Peter Pivovarov for helpful correspondence, and Fabian Mussnig for valuable comments.  We also extend our sincere gratitude to the anonymous referee, whose remarks have greatly enhanced the clarity and quality of our manuscript.
\bigskip

{\bf Conflicts of Interest:} There are no conflicts of interest.
\bibliographystyle{acm}
\bibliography{references_thesis}
\end{document}